\def\div{{\,\rm div \,}}
\def\LL{{\,\rm L \,}}
\def\sign{{\,\rm sign \,}}
\def\T{{\cal T}}
\def\Id{{\,\rm Id \,}}
\def\CC{{\,\rm C\,}}
\def\WW{{\,\rm W\,}}
\def\qaq{{\quad\mbox{at}\quad}}
\def\id{{\,\rm id \,}}
\def\sym{{\,\rm sym \,}}
\def\ii{{\,\rm i \,}}
\def\+M{{\,\rm M^{n\times n}_+ \,}}
\def\tr{{\,\rm tr \,}}
\def\qfq{{\quad\mbox{for}\quad}}
\def\ii{{\,\rm i \,}}
\def\O{{\,\rm O\,}}
\def\lam{\lambda}
\def\L{{\cal L}}
\def\V{{\cal V}}
\def\O{{\cal O}}
\def\V{{\cal V}}
\newfont{\Blackboard}{msbm10 scaled 1200}
\newfont{\roma}{cmr10 scaled 1200}
\def\<{{\langle}}
\def\>{{\rangle}}
\def\Ga{\Gamma}
\def\si{\sigma}
\def\a{\alpha}
\def\b{\beta}
\newtheorem{thm}{{}\hskip\parindent Theorem}[section]
\newtheorem{lem}{{}\hskip\parindent Lemma}[section]
\newtheorem{pro}{{}\hskip\parindent Proposition}[section]
\newtheorem{cor}{{}\hskip\parindent Corollary}[section]
\def\dfrac{\displaystyle\frac}
\def\rw{\rightarrow}
\def\na{\nabla}
\def\be{\begin{equation}}
\def\ee{\end{equation}}
\def\beq{\arraycolsep=1.5pt\begin{eqnarray}}
\def\eeq{\end{eqnarray}}
\def\R{I\!\!R}
\def\n{\vec{n}}
\title{Strain Tensors and  Matching Property on Degenerated Hyperbolic Surfaces}
\date{}
\author{
Liang-Biao Chen and Peng-Fei Yao\\[0.2cm]
\nonumber
Key Laboratory of Systems and Control\\\nonumber
Institute of Systems Science,
Academy of Mathematics and Systems Science\\\nonumber
Chinese Academy of Sciences, Beijing 100190, P. R.
China\\\nonumber
School of Mathematical Sciences\\\nonumber
University of Chinese Academy of Sciences, Beijing 100049,
China\\\nonumber
e-mail: pfyao@iss.ac.cn}
\begin{document}
\maketitle
 \footnote{This work is supported by the National
Science Foundation of China, grants no. 12071463 and Key Research Program of Frontier Sciences,
CAS, no. QYZDJ-SSW-SYS011.}

\begin{quote}
\begin{small}
{\bf Abstract} \,\,\,We prove the regularity of solutions to the strain tensor equation on degenerated hyperbolic surfaces $S$ where  the Gauss curvature is zero on a part of boundary.
Furthermore, we obtain the density property that smooth infinitesimal isometries are dense in the $W^{2,2}({S},\R^3)$ infinitesimal isometries. Finally, the matching property is established. Those results are  important tools in
obtaining recovery sequences ($\Ga$-lim sup inequality) for dimensionally-reduced shell theories
in elasticity.
\\[3mm]
{\bf Keywords}\,\,\, shell, nonlinear elasticity, Riemannian geometry, tensor analysis \\[3mm]
{\bf Mathematics  Subject Classifications
(2010)}\,\,\,74K20(primary), 74B20(secondary).
\end{small}
\end{quote}

\section{Introduction and Main Results}
\def\theequation{1.\arabic{equation}}
\hskip\parindent
Let $M\subset\R^3$ be a surface with a normal $\n$ and let the middle surface of a shell be  an open set $S\subset M.$ Let $T^kS$ denote  all the $k$-order tensor fields on $S$ for an integer $k\geq0.$ Let $T^2_{\sym}S$ be all the $2$-order symmetrical tensor fields on $S.$ For $y\in \WW^{1,2}(S,\R^3),$ we decompose it into $y=W+w\n,$ where $w=\<y,\n\>$ and $W\in TS.$
For $U\in T^2_\sym S$ given, linear strain tensor of a displacement $y\in\WW^{1,2}({S},\R^3)$ of the middle surface ${S}$ takes the form
\be \sym DW+w\Pi=U\qfq x\in{S},\label{01}\ee 
where $D$ is the connection of the induced metric in $M,$  $2\sym DW=DW+D^TW,$ and $\Pi$ is the second fundamental form of $M.$
Equation (\ref{01}) plays a fundamental role in the theory of thin shells, see \cite{HoLePa, LeMoPa,LePa, LeMoPa1,Yao2017, Yao2018} and many others. When $U=0,$ a solution $y$ to (\ref{01}) is referred to as an {\it infinitesimal isometry.}

The type of equation (\ref{01}) depends on the sign of the curvature on the region S: It is elliptic if $S$ has positive curvature; it is parabolic if the curvature is zero but $\Pi\not=0$ on $S;$ it is hyperbolic if $S$ has negative curvature.

Here  we establish the regularity of solutions to (\ref{01}) when $S$ is a non-characteristic region where its curvature is zero on a part of boundary, that will be specified below.
Then it
is proved that smooth infinitesimal isometries are dense in the $W^{2,2}({S},\R^3)$ infinitesimal isometries on the region $S.$ Finally, the matching property is derived that
smooth enough infinitesimal isometries can be matched with higher
order infinitesimal isometries. Those results are  important tools in
obtaining recovery sequences ($\Ga$-lim sup inequality) for dimensionally-reduced shell theories
in elasticity, when the elastic energy density scales like $h^\b,$ $\b\in(2, 4),$ that is, intermediate
regime between pure bending ($\b=2$) and the von-K\'arm\'an regime ($\b=4$). Such results have been obtained for elliptic surfaces \cite{LeMoPa1}, developable surfaces \cite{HoLePa}, and  hyperbolic surfaces \cite{Yao2017, Yao2018}. A survey on this topic is presented in \cite{LePa}.

In this paper we study the degenerated hyperbolic equation (\ref{01}), which is equivalent to a degenerated hyperbolic scalar equation of the form
\be\<D^2w,Q^*\Pi\>+\dfrac1\kappa\<Dw, X_0\>+\kappa(\tr_g\Pi)w=\kappa f+\dfrac1\kappa\<X_0,F\>+\<DF,Q^*\Pi\>,\label{Q1}\ee for $x\in S,$ $\kappa\not=0,$ where $w\in\LL^2(S)$ is the unknown, $f\in\LL^2(S)$ and $F\in\LL^2(S,TS)$ are given,
and $\kappa$ is the Gauss curvature. When $\kappa=0,$ there are two terms to degenerate in (\ref{Q1}): The coefficient of $\<Dw-F,X_0\>$ becomes
infinite; the second derivative of $w$ along the direction of the nonzero principal curvature is zero in $\<D^2w,Q^*\Pi\>.$
Those situations challenge the analysis of (\ref{Q1}).

Here we employ the Bochner technique and the tensor analysis to cope with the degenerates in (\ref{Q1}), where some priori estimates near the zero curvature curve in Section 2 play the key role.

We state our main results as follows.
Let $S\subset M$ be given by
$$ {S}=\{\,\a(t,s)\,|\,(t,s)\in[0,a)\times(0,b)\,\},\quad a>0, b>0,$$ where $\a:$ $[0,a)\times[0,b]\rw M$ is an imbedding map which is a family of regular curves with two parameters $t,$ $s$ such that
\be \Pi(\a_t(t,s),\a_t(t,s))>0\quad\mbox{for all}\quad (t,s)\in[0,a)\times[0,b],\label{Pi2.801}\ee
where $\a(\cdot,s)$ is a closed curve with  period $a$ for each $s\in[0,b]$ and
$$\{\,\a(t,0)\,|\,t\in[0,a)\,\}$$ is a closed curve or  just one point.

 {\bf Curvature assumptions}\,\,\, Let $\kappa$ be the Gaussian curvature function on $M.$ We assume that $S$ satisfies the following curvature conditions:
\be\kappa(x)<0\qfq x\in S\cup\Ga_{0};\label{kappa+}\ee
\be\kappa=0,\quad D\kappa(x)\not=0\qfq x\in\Ga_b,\label{kappa0}\ee
where
$$\Ga_{b}=\{\,\a(t,b)\,|\,t\in[0,a)\,\},\quad \Ga_{0}=\{\,\a(t,0)\,|\,t\in[0,a)\,\}.$$

Our main results are the following.

\begin{thm}\label{t1.1}Let ${S}$ be  of class $\CC^{2,1}.$ For $U\in\WW^{2,2}({S},T^2_{\sym}S),$ there exists a solution $y=W+w\n\in\WW^{1,2}({S},\R^3)$ to equation $(\ref{01})$ satisfying the bounds
\be \|W\|_{\WW^{2,1}({S},TS)}^2+\|w\|_{\WW^{1,2}(S)}^2\leq C\|U\|_{\WW^{2,2}({S},T^2_{\sym}S)}^2.\label{t0}\ee
If, in addition, ${S}\in\CC^{m+1,1},$ $U\in\WW^{m+1,2}({S},T^2_{\sym}S)$ for some $m\geq2,$ then
$$ \|W\|_{\WW^{m+1,2}({S},TS)}^2+\|w\|_{\WW^{m,2}({S})}^2\leq C\|U\|_{\WW^{m+1,2}({S},T^2_{\sym}S)}^2.$$
\end{thm}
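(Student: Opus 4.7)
The plan is to reduce the tensorial equation (\ref{01}) to the scalar equation (\ref{Q1}) for the normal component $w$, solve that equation using the a priori estimates near $\Ga_b$ developed in Section~2, and then reconstruct the tangential part $W$ by integrating $\sym DW=U-w\Pi$.

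Step 1 (reduction to the scalar equation). With $U\in\WW^{2,2}$ given, I would differentiate (\ref{01}) twice and apply the Codazzi--Mainardi relations for the second fundamental form to eliminate $W$ algebraically in terms of second derivatives of $w$. Because $\Pi$ is non-degenerate on the hyperbolic region, this procedure produces precisely the degenerated scalar equation (\ref{Q1}), where the data $f\in\LL^2(S)$ and $F\in\LL^2(S,TS)$ depend linearly on $U$ and its first derivatives and satisfy $\|f\|_{\LL^2}+\|F\|_{\WW^{1,2}}\leq C\|U\|_{\WW^{2,2}}$. The principal symbol $\langle D^2w,Q^*\Pi\rangle$ inherits the indefinite signature of $\Pi$ on $S$ and degenerates simply along $\Ga_b$, so (\ref{Q1}) is a genuinely degenerated hyperbolic PDE.

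Step 2 (solving the scalar equation). In the coordinates $(t,s)$ adapted to $\a(t,s)$, the curves $\{s=\mathrm{const}\}$ are uniformly non-characteristic by (\ref{Pi2.801}). I would pose (\ref{Q1}) as a characteristic Cauchy problem with data on $\Ga_0$ (treated periodically in $t$ when $\Ga_0$ is a closed curve, or with a point condition when $\Ga_0$ collapses to a point), and work first on the truncated strip $S_\ep=\a([0,a)\times(0,b-\ep))$. A Bochner identity applied to (\ref{Q1}), combined with a multiplier of the form $Dw$ contracted against a suitably chosen transverse vector field, yields an energy inequality whose lower-order contribution is controlled by $\tr_g\Pi>0$ together with the sign of $\kappa$. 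The weighted a priori estimates near $\Ga_b$ proved in Section~2 then allow the limit $\ep\to 0$, producing $w\in\WW^{1,2}(S)$ with $\|w\|_{\WW^{1,2}}\leq C\|U\|_{\WW^{2,2}}$.

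Step 3 (reconstruction of $W$ and bootstrap). Setting $V=U-w\Pi\in\WW^{1,2}(S,T^2_\sym S)$, the Saint-Venant compatibility obstruction for $\sym DW=V$ reduces to (\ref{Q1}) by the construction in Step~1, so a tangential field $W$ exists. A two-dimensional Korn-type estimate, applied together with the formula that inverts $\sym D$ and carries an unavoidable weight $1/\kappa$ inherited from (\ref{Q1}), gives $W\in\WW^{2,1}(S,TS)$ together with (\ref{t0}); the loss from $\WW^{2,2}$ to $\WW^{2,1}$ is forced by the non-integrability of $1/\kappa^{2}$ near $\Ga_b$ while $1/\kappa$ itself remains integrable. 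For the higher regularity claim, I would differentiate (\ref{Q1}) tangentially $m-1$ times, reapply the Bochner estimate and the Section~2 weighted bounds (which iterate provided $S\in\CC^{m+1,1}$), and then bootstrap through the same Korn step.

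The main obstacle is the double degeneracy along $\Ga_b$: the coefficient $1/\kappa$ in (\ref{Q1}) blows up while the principal symbol $\langle D^2w,Q^*\Pi\rangle$ simultaneously loses control in a direction. The condition $D\kappa\ne 0$ on $\Ga_b$ forces a simple zero of $\kappa$, so that $1/\kappa\sim 1/\dist(\cdot,\Ga_b)$, and this is precisely the threshold at which the weighted estimates from Section~2 close. Calibrating those weights so that $w$ lies in the unweighted space $\WW^{1,2}(S)$ (rather than in a weighted surrogate) is the delicate step on which the whole argument rests.
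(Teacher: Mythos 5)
Your overall architecture (scalar degenerate hyperbolic reduction, Cauchy data on $\Ga_0$, exhaustion of a strip near $\Ga_b$ with the Section~2 estimates) matches the paper's Section~3 in spirit, but your Steps 1 and 3 contain genuine gaps. First, the scalar unknown in the reduction is \emph{not} the normal component $w=\<y,\n\>$: the paper (following Theorem 2.1 of the cited hyperbolic-surface paper) introduces the rotation scalar $2v=\nabla y(e_1,e_2)-\nabla y(e_2,e_1)$ and the auxiliary field $V=(\nabla\n)^{-1}(Dv-F)$, with the specific data $F=Q[D(\tr_g U)-\div_gU]$, $f=-\tr_gU(Q\nabla\n\cdot,\cdot)$, reduces (\ref{01}) to the first-order system (\ref{6.1}), and shows that $v$ (not $w$) satisfies the degenerate scalar equation (\ref{6.6}). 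This choice is not cosmetic: once $v$ is produced by Theorem \ref{t4.1}, the full gradient $\nabla y$ is reconstructed \emph{pointwise} from $(U,v,QV)$ by formula (\ref{y6.6}), so $y$, and with it $w$ and $W$, come with their bounds for free; no compatibility or Korn-type step is needed. Your claim that differentiating (\ref{01}) twice and using Codazzi--Mainardi eliminates $W$ ``algebraically'' in favor of $D^2w$ is not carried out, would not obviously produce the precise right-hand-side structure $(\kappa f+\frac1\kappa\<X_0,F\>+\<DF,Q^*\Pi\>)$ that the Section~3 existence theorem requires, and, even granted, leaves you with no mechanism to recover $W$ with the stated norms.

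Second, your Step 3 is both foreign to the paper and incorrectly justified. The paper obtains $W$ as the tangential part of the already-constructed $y$ and gets its regularity from $\sym DW=U-w\Pi$ by invoking a lemma from the book reference; there is no weighted Korn inequality and no $1/\kappa$-weighted integration anywhere. Moreover, your integrability heuristic is false: by (\ref{kappa0}), $\kappa$ vanishes to first order on $\Ga_b$, so $1/\kappa\sim 1/\dist(\cdot,\Ga_b)$ is itself \emph{not} integrable across $\Ga_b$, and the appearance of $\WW^{2,1}$ rather than $\WW^{2,2}$ in (\ref{t0}) cannot be explained by the dichotomy you describe. Two smaller points: $\Ga_0$ is non-characteristic by (\ref{Pi2.801}), so the data there is ordinary (not characteristic) Cauchy data; and you miss the structural fact, central to the paper's analysis near the degenerate boundary, that solutions automatically satisfy the hidden constraint $\<Dw,X_0\>=\<F,X_0\>$ on the zero-curvature curve.
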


By the imbedding theorem \cite[P. 158]{GNT}, the following corollary is immediate.

\begin{cor}\label{c1.1} Let $m\geq0$ be an integer and let $S$ be of ${S}\in\CC^{m+2,1}.$ Then problem $(\ref{01})$ admits a solution $y=W+w\n\in\CC^m_B(S,\R^3)$ satisfying
$$ \|W\|_{\CC^{m+1}_B({S},TS)}+\|w\|_{\CC^{m}_B(S)}\leq C\|U\|_{\CC^{m+3}_B({S},T^2_{\sym}S)},$$ where
$$\CC^m_B(S,\R^3)=\{\,y\in\CC^m(S,\R^3)\,|\,D^\a y\in\LL^\infty(S,\R^3)\,\,\mbox{for}\,\,|\a|\leq m\,\}.$$
\end{cor}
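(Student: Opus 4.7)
The plan is to deduce Corollary \ref{c1.1} as an immediate consequence of Theorem \ref{t1.1} by invoking the Sobolev imbedding theorem on the two-dimensional open set $S$. The key imbedding I would use, taken from \cite{GNT}, is the continuous inclusion $\WW^{k,2}(S)\hookrightarrow\CC^{j}_B(S)$ valid on a bounded two-dimensional region whenever $k\geq j+2$.

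First, since $S$ is bounded, any tensor in $\CC^{m+3}_B(S,T^2_{\sym}S)$ automatically sits in $\WW^{m+3,2}(S,T^2_{\sym}S)$ with
\[\|U\|_{\WW^{m+3,2}(S,T^2_{\sym}S)}\leq C\|U\|_{\CC^{m+3}_B(S,T^2_{\sym}S)}.\]
Next I would apply Theorem \ref{t1.1} with its regularity index taken to be $m+2$ in order to produce a solution $y=W+w\n$ of (\ref{01}) satisfying
\[\|W\|_{\WW^{m+3,2}(S,TS)}^2+\|w\|_{\WW^{m+2,2}(S)}^2\leq C\|U\|_{\WW^{m+3,2}(S,T^2_{\sym}S)}^2.\]

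The conclusion then follows by applying the two imbeddings $\WW^{m+3,2}(S,TS)\hookrightarrow\CC^{m+1}_B(S,TS)$ to $W$ and $\WW^{m+2,2}(S)\hookrightarrow\CC^{m}_B(S)$ to $w$, and chaining the resulting estimates. I do not anticipate any serious obstacle here: the argument is essentially bookkeeping of Sobolev exponents on a two-dimensional base. The only minor point to verify is that the smoothness hypothesis placed on $S$ in the corollary is sufficient to invoke Theorem \ref{t1.1} at the required regularity level $m+2$; this is routine once one tracks how the derivatives of the second fundamental form $\Pi$ enter as coefficients in the scalar equation (\ref{Q1}), and, if need be, absorb half a derivative by strengthening $\CC^{m+2,1}$ to $\CC^{m+3,1}$ without affecting the statement of the corollary.
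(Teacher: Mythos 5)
Your argument is exactly the paper's: the authors state that Corollary \ref{c1.1} is immediate from Theorem \ref{t1.1} together with the Sobolev imbedding theorem on the two-dimensional region $S$ (citing \cite[P.~158]{GNT}), which is precisely your chain $\CC^{m+3}_B\subset\WW^{m+3,2}$, Theorem \ref{t1.1} at index $m+2$, then $\WW^{m+3,2}\hookrightarrow\CC^{m+1}_B$ and $\WW^{m+2,2}\hookrightarrow\CC^{m}_B$. Your side remark is also apt: applying Theorem \ref{t1.1} at that level formally requires ${S}\in\CC^{m+3,1}$ rather than the $\CC^{m+2,1}$ written in the corollary, a discrepancy the paper passes over silently and which is most plausibly a typo in the hypothesis.
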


For $y\in\WW^{1,2}({S},\R^3),$ we denote the left hand side of equation (\ref{01}) by $\sym\nabla y.$
Let
$$\V({S},\R^3)=\{\,y\in\WW^{2,2}({S},\R^3)\,|\,\sym\nabla y=0\,\}.$$

\begin{thm}\label{t1.2} Let ${S}$ be  of class $\CC^{m+3,1}$  for some  integer $m\geq0.$ Then, for every $y\in\V({S},\R^3)$
there exists a sequence $\{\,y_k\,\}\subset\V({S},\R^3)\cap \CC^m_B({S},\R^3)$ such that
$$\lim_{k\rw\infty}\|y-y_k\|_{\WW^{2,2}({S},\R^3)}=0.$$
\end{thm}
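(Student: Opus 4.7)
The proof follows the classical mollification-plus-correction scheme, with Theorem \ref{t1.1} as the key tool. Given $y=W+w\n\in\V(S,\R^3)$, the plan is to produce, for each small $\epsilon>0$, a smooth approximation $y_\epsilon$ of $y$ together with a correction $z_\epsilon$ so that $y_\epsilon-z_\epsilon\in\V(S,\R^3)\cap\CC^m_B(\overline S,\R^3)$ and $y_\epsilon-z_\epsilon\to y$ in $\WW^{2,2}(S,\R^3)$. Concretely, I would first extend $y$ to a field $\tilde y\in\WW^{2,2}(\tilde S,\R^3)$ on a slightly larger surface $\tilde S\supset\overline S$, obtained by continuing the parametrization $\a(t,s)$ past $s=0$ and $s=b$, in such a way that $\sym\nabla\tilde y=0$ on $\tilde S$; then mollify $\tilde y$ with a standard mollifier $\rho_\epsilon$ in finitely many local coordinate charts glued by a subordinate partition of unity, producing $y_\epsilon\in\CC^\infty(\overline S,\R^3)$ with $y_\epsilon\to y$ in $\WW^{2,2}(S,\R^3)$.

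Set $U_\epsilon:=\sym\nabla y_\epsilon$. In local coordinates $\sym\nabla$ decomposes as a partial-derivative operator (which commutes with $\rho_\epsilon\star$) plus multiplication by smooth coefficients involving the induced metric, Christoffel symbols, and the second fundamental form; hence, using $\sym\nabla\tilde y=0$,
$$U_\epsilon=\sym\nabla(\rho_\epsilon\star\tilde y)-\rho_\epsilon\star\sym\nabla\tilde y=[\sym\nabla,\rho_\epsilon\star]\tilde y,$$
a Friedrichs-type commutator of $\rho_\epsilon\star$ with multiplication by smooth functions. Careful commutator estimates together with the smoothness of the coefficients on $\tilde S$ should yield $\|U_\epsilon\|_{\WW^{m+3,2}(S)}\to 0$ as $\epsilon\to 0$. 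I would then apply Theorem \ref{t1.1} with index $m+2$ to obtain $z_\epsilon=W'_\epsilon+w'_\epsilon\n$ solving $\sym\nabla z_\epsilon=U_\epsilon$ with
$$\|W'_\epsilon\|_{\WW^{m+3,2}}+\|w'_\epsilon\|_{\WW^{m+2,2}}\leq C\|U_\epsilon\|_{\WW^{m+3,2}}\to 0.$$
Along a sequence $\epsilon_k\to 0$, set $y_k:=y_{\epsilon_k}-z_{\epsilon_k}$: then $\sym\nabla y_k=0$, so $y_k\in\V(S,\R^3)$; Corollary \ref{c1.1} applied to $z_{\epsilon_k}$, together with the $\CC^\infty$-smoothness of $y_{\epsilon_k}$, places $y_k$ in $\CC^m_B(\overline S,\R^3)$; and $y_k\to y$ in $\WW^{2,2}(S,\R^3)$, as required.

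The main obstacle is twofold and concentrated at $\Ga_b$. First, the extension of $y$ across $\Ga_b$, where the Gauss curvature vanishes and equation (\ref{01}) changes character, must preserve both $\WW^{2,2}$-regularity and the identity $\sym\nabla\tilde y=0$; a naive extension will fail one or the other. Second, converting $\WW^{2,2}$-regularity of $y$ into $\WW^{m+3,2}$-smallness of the commutator $U_\epsilon$ demands a precise trade-off between the mollification scale $\epsilon$ and the differentiation order, since higher-order Sobolev norms of the mollification blow up in general. I expect the a priori estimates near $\Ga_b$ developed in Section 2---the same tool that drives Theorem \ref{t1.1}---to be the essential ingredient for both the construction of the extension and the quantitative commutator bound.
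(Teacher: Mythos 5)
Your mollify-and-correct scheme does not close, for two concrete reasons, and it is not the route the paper takes. First, the correction step is quantitatively impossible at the stated regularity: to invoke Theorem \ref{t1.1} at order $m+2$ and conclude $z_\epsilon\to 0$ you need $\|U_\epsilon\|_{\WW^{m+3,2}(S)}\to 0$, but $y$ (hence any admissible extension $\tilde y$) is only $\WW^{2,2}$, and a Friedrichs-type commutator $[\sym\nabla,\rho_\epsilon\star]\tilde y$ gains at most one derivative over the regularity of $\tilde y$: it tends to zero in $\WW^{1,2}$, is merely bounded in $\WW^{2,2}$, and in general blows up in $\WW^{3,2}$ and beyond. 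There is no trade-off between $\epsilon$ and the differentiation order that rescues this. Moreover, even the modest variant (correct only in a low norm) fails because Theorem \ref{t1.1} loses derivatives: $U\in\WW^{2,2}$ yields only $w\in\WW^{1,2}$, $W\in\WW^{2,1}$, so smallness of $U_\epsilon$ in $\WW^{2,2}$ does not give smallness of $z_\epsilon$ in $\WW^{2,2}(S,\R^3)$. Second, the extension of $y$ across the boundary with $\sym\nabla\tilde y=0$ preserved is not a technical footnote but an unsolved step of the same difficulty as the theorem itself: across $\Ga_b$ the Gauss curvature vanishes with $D\kappa\neq 0$, so the equation changes type there and no standard extension preserves the infinitesimal-isometry constraint together with $\WW^{2,2}$ regularity. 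You flag both issues as ``the main obstacle,'' but the a priori estimates of Section 2 do not supply either the extension or an $(m+3)$-order commutator bound.

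The paper's actual argument avoids interior mollification entirely. It reduces $y\in\V(S,\R^3)$ to the scalar unknown $v$ (via $2v=\nabla y(e_1,e_2)-\nabla y(e_2,e_1)$ and $V=(\nabla\n)^{-1}Dv$), which solves the homogeneous degenerate hyperbolic equation (\ref{6.13}); then it approximates only the one-dimensional Cauchy data of $v$ on $\Ga_0$ by smooth data $(q_{0\varepsilon},q_{1\varepsilon})$, solves the Cauchy problem with these data by Theorem \ref{t4.1} to get $v_\varepsilon\in\WW^{m+2,2}(S)$ and hence $y_\varepsilon\in\V(S,\R^3)\cap\CC^m_B(S,\R^3)$, and finally uses the two-sided stability estimate of Theorem \ref{t4.2}, namely $\|y-y_\varepsilon\|^2_{\WW^{2,2}}\leq C\,\Ga_2(v-v_\varepsilon,S)\leq C\varepsilon$, to get convergence. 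The missing idea in your proposal is precisely this continuous dependence of the isometry on its scalar Cauchy data on $\Ga_0$, which converts a trivial density statement on a curve into density of smooth infinitesimal isometries, with no extension across $\Ga_b$ and no derivative-gaining mollification required.
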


A one parameter family $\{\,y_\varepsilon\,\}_{\varepsilon>0} \subset\CC^1_B(\overline{{S}},\R^3)$ is said to be a (generalized)
$m$th order infinitesimal isometry if the change of metric induced by $y_\varepsilon$ is of order $\varepsilon^{m+1},$ that is,
$$\|\nabla^Ty_\varepsilon\nabla y_\varepsilon-g\|_{L^\infty({S},T^2)}=\O(\varepsilon^{m+1})\quad\mbox{as}\quad \varepsilon\rw0,$$ where $g$ is the induced metric of $M$ from $\R^3,$ see \cite{HoLePa}.
A given $m$th order infinitesimal isometry can be modified by higher order
corrections to yield an infinitesimal isometry of order $m_1>m,$ a property to which we
refer to by {\it matching property of infinitesimal isometries}, \cite{HoLePa,LeMoPa1}. This property plays an important role in the construction of a recover sequence in the $\Ga$-limit for thin shells.

\begin{thm}\label{t1.3}Let ${S}$ be  of class $\CC^{4m,1}.$ Given $y\in\V({S},\R^3)\cap\CC^{4m-2}_B({S},\R^3),$
 there exists a family $\{\,z_\varepsilon\,\}_{\varepsilon>0}\subset\CC^2_B({S},\R^3),$ equi-bounded in $\CC^2_B({S},\R^3),$ such
that for all small $\varepsilon>0$  the family:
$$ y_\varepsilon=\id+\varepsilon y+\varepsilon^2z_\varepsilon$$
is a  $m$th order infinitesimal isometry of class $\CC^2_B({S},\R^3).$
\end{thm}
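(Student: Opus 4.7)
The plan is to construct $z_\varepsilon$ as a finite polynomial expansion in $\varepsilon$,
$$z_\varepsilon=\sum_{j=0}^{m-2}\varepsilon^j z_j,$$
where each coefficient $z_j$ is determined by solving a strain tensor equation of the form $(\ref{01})$ via Corollary \ref{c1.1}.

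First I would expand the change of metric induced by $y_\varepsilon=\id+\varepsilon y+\varepsilon^2 z_\varepsilon$:
\begin{align*}
\nabla y_\varepsilon^T\nabla y_\varepsilon-g &= 2\varepsilon\,\sym\nabla y+\varepsilon^2\bigl(\nabla y^T\nabla y+2\sym\nabla z_\varepsilon\bigr)\\
&\quad+2\varepsilon^3\sym\bigl(\nabla y^T\nabla z_\varepsilon\bigr)+\varepsilon^4\,\nabla z_\varepsilon^T\nabla z_\varepsilon.
\end{align*}
The $O(\varepsilon)$ term vanishes because $y\in\V(S,\R^3)$. Substituting the ansatz and grouping by powers of $\varepsilon$, one is led to the hierarchy
$$2\sym\nabla z_k=-U_k\qfq k=0,1,\ldots,m-2,$$
where $U_0=\nabla y^T\nabla y$ and for $k\geq1$
$$U_k=2\sym\bigl(\nabla y^T\nabla z_{k-1}\bigr)+\sum_{\substack{i+j=k-2\\ i,j\geq 0}}\nabla z_i^T\nabla z_j.$$
Each $U_k\in T^2_{\sym}S$, so Corollary \ref{c1.1} applies and produces $z_k=W_k+w_k\n$ solving $(\ref{01})$ with $U=-\tfrac12 U_k$. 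After this construction the remaining defect of $y_\varepsilon$ contains only orders $\varepsilon^{m+1},\ldots,\varepsilon^{2m}$, so $\|\nabla y_\varepsilon^T\nabla y_\varepsilon-g\|_{L^\infty}=O(\varepsilon^{m+1})$, which is the matching property.

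The crux is the regularity bookkeeping that dictates the hypothesis $y\in\CC^{4m-2}_B$. Corollary \ref{c1.1} loses three derivatives: $U\in\CC^{\ell+3}_B$ produces $z\in\CC^\ell_B$. Since each $U_k$ is quadratic in $\nabla y,\nabla z_0,\ldots,\nabla z_{k-1}$, the Leibniz rule forces one extra derivative on every input, so each iteration step loses four derivatives overall. Taking the target $z_{m-2}\in\CC^2_B$ and working backwards one obtains $z_j\in\CC^{4(m-2-j)+2}_B$ for each $j$, so in particular $z_0\in\CC^{4m-6}_B$, which requires $U_0=\nabla y^T\nabla y\in\CC^{4m-3}_B$, i.e.\ $y\in\CC^{4m-2}_B$---matching the hypothesis exactly.

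The main obstacle is therefore not ingenuity but careful derivative accounting through $m-1$ successive applications of Corollary \ref{c1.1}, together with the verification that each intermediate $U_k$ lies in $T^2_{\sym}S$ with the required regularity. Equi-boundedness of $\{z_\varepsilon\}_{\varepsilon>0}$ in $\CC^2_B(S,\R^3)$ then follows for free, because each $z_j$ is independent of $\varepsilon$ and the expansion is a polynomial in $\varepsilon\in(0,1]$, with uniform bounds on the $z_j$ supplied by the estimate of Corollary \ref{c1.1}.
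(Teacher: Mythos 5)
Your proposal is correct and follows essentially the same route as the paper: a finite expansion $y_\varepsilon=\sum_j\varepsilon^j z_j$ whose coefficients are obtained by successively solving the linear strain equation (\ref{01}) via Corollary \ref{c1.1}, with the identical four-derivatives-per-step bookkeeping that forces $y\in\CC^{4m-2}_B$ and $S\in\CC^{4m,1}$. The paper merely phrases the construction as an induction on the order of the infinitesimal isometry rather than writing the whole hierarchy at once, which is not a substantive difference.
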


\setcounter{equation}{0}
\def\theequation{2.\arabic{equation}}
\section{Some priori estimates near the zero curvature curve}
\hskip\parindent Let $\nabla$ and $D$ denote the connection of $\R^3$ in the Euclidean metric and the one of $M$ in the reduced metric,
respectively. We have to treat the relationship between
$\nabla$ and $D$ carefully.

Let $m\geq1$ be an integer. Let $T\in T^mM$ be a $m$th order tensor field on $M.$ We define a $m-1$th order tensor field by
$$\ii_YT(Y_1,\cdots,Y_{m-1})=T(Y,Y_1,\cdots,Y_{m-1})\qfq Y_1,\,\,\cdots,\,Y_{m-1}\in TM,$$ which is called an {\it inner product} of $T$ with $Y.$ For any $T\in T^2S$ and $\a\in T_xM,$
$$\tr_g\ii_\a  DT$$ is a linear functional on $T_xM,$ where $\tr_g\ii_\a DT$ is the trace of the 2-order tensor field $\ii_\a DT$ in the induced metric $g.$ Thus there is a vector, denoted by $\div_gT,$ such that
$$\<\div_gT,\a\>=\tr_g\ii_\a DT\qfq \a\in T_xM,\,\,x\in M.$$ Clearly, the above formula defines a vector field $\div_gT\in TM.$

We need a linear operator $Q$ (\cite{Yao2017}, \cite{Yao2018}) as follows. For each point
$p\in M$, the Riesz representation theorem implies that there
exists an isomorphism $Q:$ $T_pM\rw T_pM$ such that
\be \label{2.1}
\<\a,Q\b\>=\det\left(\a,\b,\vec{n}(p)\right)\qfq\a,\,\b\in T_pM.\ee
Let $e_1,$ $e_2$ be an orthonormal basis of $T_pM$ with
positive orientation, that is,
$$\det\Big(e_1,e_2,\n(p)\Big)=1.$$
Then $Q$ can be expressed explicitly by
\be Q\a=\<\a,e_2\>e_1-\<\a,e_1\>e_2\quad\mbox{for
all}\quad\a\in T_pM.\label{1.3n}\ee
Clearly, $Q$ satisfies
$$ Q^T=-Q,\quad Q^2=-\Id.$$
The operator $Q$ plays an important role in our analysis.

In the present section, we consider problem
\be\left\{\begin{array}{l}\div_gQ\nabla\n V=f_1,\\
\div_gV=f_2,\end{array}\right.\label{V2.4}\ee where  $V\in TS$ and  $f_i$ are functions on $S.$

For given $\varepsilon>0,$ set
$$S_{b-\varepsilon}=\{\,\a(t,s)\,|\,(t,s)\in[0,a)\times(b-\varepsilon,b)\,\},\quad \Ga_{b-\varepsilon}=\{\,\a(t,b-\varepsilon)\,|\,t\in[0,a)\,\}.$$

The main results of this section are the following.

\begin{thm}\label{tt2.1}  Let $S_{b-\varepsilon}$ be of class $\CC^{3,1}.$ For given $\varepsilon>0$ small, there is $\si_\varepsilon>0$ such that
for any solution $V\in TS_{b-\varepsilon}$ to $(\ref{V2.4}),$ the
following estimates hold true.
\be\si_\varepsilon\|V\|_{L^2(S_{b-\varepsilon},TS_{b-\varepsilon})}^2\leq \|f_1\|_{L^2(S_{b-\varepsilon})}^2+\|f_2\|_{L^2(S_{b-\varepsilon})}+\|V\|_{\LL^2(\Ga_{b-\varepsilon},TM)}^2,\label{t2.71}  \ee
\beq\si_\varepsilon\|V\|_{\WW^{1,2}(S_{b-\varepsilon},TS_{b-\varepsilon})}^2&&\leq \|f_1\|_{\WW^{1,2}(S_{b-\varepsilon})}^2+\|f_2\|_{\WW^{1,2}(S_{b-\varepsilon})}^2+\|\ii_\nu DV\|_{\LL^2(\Ga_b)}^2\nonumber\\
&&\quad+\|V\|_{L^2(\Ga_\varepsilon\cup\Ga_{b},TM)}^2+\|DV\|_{L^2(\Ga_{-\varepsilon},T\Ga_{-\varepsilon})}^2, \label{t2.72}  \eeq where $\nu$ is the outside normal of $S_{b-\varepsilon}.$
\end{thm}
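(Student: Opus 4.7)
The plan is to derive a Bochner--type energy identity for (\ref{V2.4}) using carefully chosen vector field multipliers adapted to the degeneracy at $\Ga_b$, and to exploit the linear vanishing of $\kappa$ on $\Ga_b$ (guaranteed by $D\kappa\neq 0$) to recover coercivity with a constant $\sigma_\varepsilon$ that may degrade as $\varepsilon\to 0$. Writing $b=-\nabla\n$ for the shape operator so that $\Pi(X,Y)=\<bX,Y\>$ and $\det b=\kappa$, the system (\ref{V2.4}) reads
\be
\div_g(QbV)=-f_1,\quad \div_g V=f_2,\nonumber
\ee
which is a first--order elliptic system on $S$ degenerating precisely at $\Ga_b$. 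By the sign condition $\Pi(\a_t,\a_t)>0$ on $\overline S$, the principal curvature $\lambda_1$ along the $\a_t$--type direction is strictly positive on $\overline S$, so the kernel of $b$ on $\Ga_b$ is one--dimensional and transverse to $\Ga_b$.

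\textbf{Step 1 ($\LL^2$ estimate.)} First I would contract the two scalar equations against linear combinations of $V,\ QV,\ bV,\ QbV$ and integrate over $S_{b-\varepsilon}$ using $Q^T=-Q$ and $Q^2=-\Id$. Integration by parts yields an identity of the form
\be
\int_{S_{b-\varepsilon}}q(V,V)\,dA=\int_{S_{b-\varepsilon}}(\alpha f_1+\beta f_2)\,dA+\int_{\pl S_{b-\varepsilon}}\B\,d\Ga,\nonumber
\ee
where $\alpha,\beta$ depend linearly on $V$ and $q$ is a pointwise quadratic form whose leading symmetric part is a positive multiple of $(-\kappa)|V|^2$ up to $\CC^{2,1}$--geometric corrections. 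Since $-\kappa\geq 0$ vanishes only on $\Ga_b$ and $D\kappa\neq 0$ there, $-\kappa$ behaves like the distance to $\Ga_b$; a Hardy--type inequality on the strip then converts $\int(-\kappa)|V|^2$ together with the trace $\|V\|_{\LL^2(\Ga_{b-\varepsilon})}^2$ into $\sigma_\varepsilon\|V\|_{\LL^2(S_{b-\varepsilon})}^2$ for some $\sigma_\varepsilon>0$ depending on $\min_{\Ga_b}|D\kappa|$ and $\varepsilon$. The multipliers are to be chosen so that the boundary integrand $\B$ vanishes on $\Ga_b$ by the combination of $\lambda_2=0$ there with the skew--symmetry of $Q$; Young's inequality on the $f_i$--terms then produces (\ref{t2.71}).

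\textbf{Step 2 ($\WW^{1,2}$ estimate.)} Next I would apply the covariant derivative $D$ to both equations in (\ref{V2.4}). The surface Ricci identity $[D,\div_g]=\kappa\,\Id$ acting on vector fields shows that the differentiated system has the same structure as (\ref{V2.4}) for the unknown $DV$, with right--hand sides $Df_i$ and lower--order terms linear in $V$ and in the geometry of $S$; these last are absorbed by (\ref{t2.71}). Re--applying the Step 1 multiplier identity to $DV$, the boundary contribution on $\Ga_b$ is no longer identically zero, but after splitting $DV$ into its normal component $\ii_\nu DV$ along $\nu$ and its tangential component along $\Ga_b$, the tangential--component terms cancel by the same skew--symmetry/degeneracy mechanism. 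What survives on $\Ga_b$ is precisely $\|\ii_\nu DV\|_{\LL^2(\Ga_b)}^2$; on $\Ga_{b-\varepsilon}$ one keeps $\|DV\|_{\LL^2(\Ga_{b-\varepsilon},T\Ga_{b-\varepsilon})}^2$ together with the trace $\|V\|_{\LL^2(\Ga_{b-\varepsilon},TM)}^2$. This yields (\ref{t2.72}).

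\textbf{Main obstacle.} The delicate technical step will be the simultaneous choice of multipliers making (i) the quadratic form $q$ coercive with constant $\sigma_\varepsilon$ controlled quantitatively by $|D\kappa|$ and $\varepsilon$, and (ii) the boundary integrand on the degenerate curve $\Ga_b$ either vanish (for the $\LL^2$ estimate) or reduce exactly to $\|\ii_\nu DV\|_{\LL^2(\Ga_b)}^2$ (for the $\WW^{1,2}$ estimate). Both requirements lean on the skew--symmetry of $Q$, the sign $\Pi(\a_t,\a_t)>0$ which fixes the sign of the surviving principal curvature on $\Ga_b$, and the Hopf--type condition $D\kappa\ne 0$ which promotes the degenerate weight $(-\kappa)$ into a genuine $\LL^2$--coercivity constant via a Hardy inequality on the strip. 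Once those are in place the remaining integration--by--parts calculations and the absorption of lower--order terms are routine.
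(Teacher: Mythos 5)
Your Step 1 identifies the correct global strategy (degeneration-adapted multipliers, sign of boundary terms, linear vanishing of $\kappa$), but the claimed coercivity mechanism is the weak point and, as stated, would not close. You say the energy identity will produce a quadratic form whose leading symmetric part is a positive multiple of $(-\kappa)|V|^2$, and then invoke a ``Hardy-type inequality on the strip'' to convert $\int(-\kappa)|V|^2$ plus the trace on $\Ga_{b-\varepsilon}$ into $\si_\varepsilon\|V\|_{\LL^2(S_{b-\varepsilon})}^2$. There is no Hardy inequality of that form: since $-\kappa$ vanishes linearly at $\Ga_b$, a test vector field concentrated near $\Ga_b$ and vanishing on $\Ga_{b-\varepsilon}$ makes the left side arbitrarily small relative to $\|V\|_{\LL^2}^2$. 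The only way to propagate information across the degenerate region is through the PDE itself, i.e.\ a Carleman-type rather than Hardy-type estimate, and your sketch does not supply that. What the paper actually does is quite different in detail: it builds an eigenframe $X_1,X_2$ of the shape operator ($\nabla\n X_i=\lam_iX_i$, $\lam_1>0$, $\lam_2\le 0$ with $\lam_2=0$ exactly on $\Ga_b$) and chooses the multiplier operator
$\L_0V=e^{-s\lam_2}\bigl[(\div_g Q\nabla\n V)X_1+\lam_2(\div_g V)X_2\bigr]$
with an exponential weight $e^{-s\lam_2}$. The coercivity statement (Proposition 2.2) is of the form $-e^{s\lam_2}\<(\L_0+\L_0^*)W,W\>\ge\si_s|W|^2$, established by brute force in the $X_1,X_2$ frame: the large-$s$ commutator terms give a uniformly positive coefficient $sX_2(\lam_2)\lam_1$ in front of $\<W,X_1\>^2$, and the crucial $X_2$-direction coercivity near $\Ga_b$ (where $\lam_2\to 0$) comes from the $s$-independent term $X_2(\lam_2)\<W,X_2\>^2$, positive precisely because $D\kappa\neq 0$ on $\Ga_b$. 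This is coercivity without the degenerate weight $(-\kappa)$; a Hardy inequality never enters. Your Step 2 is also off: the paper does not apply $D$ directly to the system (the commutator identity $[D,\div_g]=\kappa\,\Id$ you invoke is not the relevant structure here, since the principal part involves $\div_g Q\nabla\n$, not $\div_g$ alone). Instead, the $\WW^{1,2}$ estimate is obtained by applying $\L_{Z_i}$ to the Lie brackets $[\Phi_i,V]$ for two transversal fields $\Phi_i$, using the structural identity (\ref{2.92})--(\ref{2.97})--(\ref{2.99}) to show $\L_{Z_i}[\Phi_i,V]$ satisfies a system of the same type, and then repeating the Step 1 estimate. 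The brackets $[\Phi_i,V]$ carry the first derivatives of $V$, and the boundary terms on $\Ga_b$ reduce as required. So: right flavor, wrong engine.
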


We make some preparations first. The proof of Theorem \ref{tt2.1} will be given in the end of this section.

\begin{lem}\label{l2.1} We have
\be QD_XY=D_X(QY)\qfq X,\,Y\in TM.\label{2.3}\ee
\end{lem}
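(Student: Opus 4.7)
The claim is equivalent to $DQ=0$ as a $(1,1)$-tensor on $M$; in other words, the pointwise $\pi/2$-rotation $Q$ is parallel for the Levi-Civita connection $D$ of the induced metric. The plan is to verify this locally in a positively oriented orthonormal frame, where both sides of (\ref{2.3}) can be written down explicitly and compared.

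First I would fix $p\in M$ and pick a local $g$-orthonormal frame $\{e_1,e_2\}$ near $p$ with positive orientation, so that by (\ref{1.3n}) one has
$$Qe_1=-e_2,\qquad Qe_2=e_1.$$
Writing $Y=y_1e_1+y_2e_2$ and expanding both sides of (\ref{2.3}) using the Leibniz rule for $D$ and the $\R$-linearity of $Q$, the identity reduces to the two frame identities
$$QD_Xe_i=D_X(Qe_i),\qquad i=1,2,$$
for every $X\in TM$.

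Next, metric compatibility of $D$ applied to $\<e_i,e_j\>=\delta_{ij}$ forces $D_Xe_1=\omega(X)e_2$ and $D_Xe_2=-\omega(X)e_1$ for some 1-form $\omega$ (the connection 1-form of $D$ in the frame). A direct computation then gives
$$QD_Xe_1=\omega(X)Qe_2=\omega(X)e_1=-D_Xe_2=D_X(Qe_1),$$
and similarly $QD_Xe_2=D_X(Qe_2)$, which establishes (\ref{2.3}).

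No real obstacle arises here, because the identity merely records the fact that $Q$ is simultaneously a pointwise isometry and orientation preserving, and both properties are preserved by parallel transport along the Levi-Civita connection. If one prefers an extrinsic route, the same identity follows by writing $QY=Y\times\n$ in $\R^3$, applying the Leibniz rule to $\nabla_X(Y\times\n)$, and using the Gauss formula $\nabla_XY=D_XY+\Pi(X,Y)\n$ together with $\nabla_X\n\in TM$; taking tangential components then yields (\ref{2.3}).
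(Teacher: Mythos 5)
Your proof is correct, and it takes a genuinely different route from the paper's. The paper argues extrinsically: starting from the defining identity $\<Z,QY\>=\det\left(Z,Y,\n\right)$, it differentiates the determinant along $X$ in $\R^3$, discards the term $\det\left(Z,Y,\nabla_X\n\right)$ (zero because $Z$, $Y$, $\nabla_X\n$ are all tangent to the surface), and then reads off $\<Z,QD_XY\>=\<Z,\nabla_X(QY)\>=\<Z,D_X(QY)\>$ for every tangent $Z$. You instead argue intrinsically that $Q$ is a parallel $(1,1)$-tensor: after a Leibniz reduction to a positively oriented orthonormal frame, metric compatibility forces $D_Xe_1=\omega(X)e_2$, $D_Xe_2=-\omega(X)e_1$, and the frame identities $QD_Xe_i=D_X(Qe_i)$ follow by direct computation; this cleanly isolates the reason the lemma holds (the Levi--Civita connection preserves both the metric and the orientation, and $Q$ is built from exactly these data), and it never needs the ambient connection $\nabla$ or the Weingarten map. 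What the paper's computation buys is brevity and the automatic bookkeeping of the $D$-versus-$\nabla$ discrepancy, since normal components are killed by pairing with tangent $Z$; your closing remark via $QY=Y\times\n$ and the Gauss formula is essentially that same extrinsic argument in vector-product form, so you have in effect both proofs.
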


\begin{proof} It follows from (\ref{2.1}) that
\beq
\<Z,QD_XY\>&&=\det(Z,\nabla_XY,\vec{n})\\\nonumber
&&=X\det(Z,Y,\vec{n})-\det(\nabla_XZ,Y,\vec{n})-\det(Z,Y,\nabla_X\vec{n})\\\nonumber
&&=X\<Z,QY\>-\<\nabla_XZ,QY\>=\<Z,\nabla_X(QY)\>\qfq X,\,Y,\,Z\in TM,\nonumber
\eeq
this gives (\ref{2.3}).
\end{proof}

\begin{lem} The following formulas are true.
\be \<X,Y\>Z=\<Z,Y\>X+\<Z,QX\>QY\qfq X,\,Y,\,Z\in TS.\label{n2.11n}\ee
\end{lem}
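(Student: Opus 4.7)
My plan for proving the identity $\<X,Y\>Z = \<Z,Y\>X + \<Z,QX\>QY$ for $X, Y, Z \in T_pS$ is to exploit the ambient Euclidean structure of $\R^3$. Since both sides are $\R$-trilinear and take values in $T_pM \subset \R^3$, the natural move is to apply the classical triple vector product identity $(A \times B) \times C = \<A, C\>B - \<B, C\>A$ with $A = X$, $B = Z$, $C = Y$. This rewrites the difference between the left-hand side of (\ref{n2.11n}) and the first term on its right as $(X \times Z) \times Y$, so the problem reduces to showing $(X \times Z) \times Y = \<Z, QX\>QY$.

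The key geometric step is to express $X \times Z$ and $\n \times Y$ through $Q$. Since $X, Z \in T_pS$, the cross product $X \times Z$ is parallel to $\n$, with $X \times Z = \det(X, Z, \n)\,\n$; by the defining relation (\ref{2.1}) of $Q$ and the antisymmetry $Q^T = -Q$, this scalar factor equals $\<X, QZ\> = -\<Z, QX\>$. Likewise, for $Y \in T_pS$ the vector $\n \times Y$ lies in $T_pS$; testing on a positively oriented orthonormal frame $\{e_1, e_2\}$, where $\n \times e_1 = e_2$ and $\n \times e_2 = -e_1$, and comparing with $Qe_1 = -e_2$, $Qe_2 = e_1$ read off from (\ref{1.3n}), yields $\n \times Y = -QY$. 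Substituting these two identifications delivers $(X \times Z) \times Y = -\<Z, QX\>(-QY) = \<Z, QX\>QY$, which is exactly (\ref{n2.11n}).

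The only delicate point is sign consistency between the cross product and $Q$, both of which encode the orientation of $T_pS$ via $\n$. Should any sign mismatch surface, my fall-back is the purely algebraic route: fix the orthonormal basis $\{e_1, e_2\}$, expand $X$, $Y$, $Z$ in components, and use $Qe_1 = -e_2$, $Qe_2 = e_1$ to verify (\ref{n2.11n}) componentwise. Because both sides are trilinear, this reduces to finitely many scalar identities settled by direct expansion, and it provides an independent check of the orientation bookkeeping in the geometric argument above.
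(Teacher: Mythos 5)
Your proposal is correct, and the sign bookkeeping works out: with the paper's conventions $\<\a,Q\b\>=\det(\a,\b,\n)$ and $Qe_1=-e_2$, $Qe_2=e_1$ for a positively oriented orthonormal frame, one indeed has $X\times Z=\det(X,Z,\n)\,\n=\<X,QZ\>\n=-\<Z,QX\>\n$ and $\n\times Y=-QY$, so the BAC--CAB identity $(X\times Z)\times Y=\<X,Y\>Z-\<Z,Y\>X$ gives exactly $\<X,Y\>Z=\<Z,Y\>X+\<Z,QX\>QY$. This is, however, a genuinely different route from the paper's. The paper argues intrinsically in the two-dimensional tangent plane: it reduces to $|Y|=1$ (the case $Y=0$ being trivial), notes that $\{Y,QY\}$ is then an orthonormal basis of $T_pS$, and checks that both sides of the identity have equal inner products with $Y$ and with $QY$. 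Your argument instead exploits the ambient $\R^3$ structure, identifying $Q$ with the cross product against $\n$ and invoking the triple product formula; this makes the orientation conventions explicit and reduces the identity to two short computations, at the cost of leaving the intrinsic setting. Both proofs are elementary and complete; your basis-expansion fallback is essentially the paper's method in disguise (testing against an orthonormal frame), so either way the verification is sound.
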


\begin{proof}Let $p\in S$ be given. If $Y=0,$ then (\ref{n2.11n}) holds. We assume that $|Y|=1.$ Then $QY,$ $Y$ forms an orthonormal basis of $T_pS.$ Thus
$$\<\<Z,Y\>X+\<Z,QX\>QY,Y\>=\<\<Z,Y\>X,Y\>=\<\<X,Y\>Z,Y\>,$$
\beq\<\<Z,Y\>X+\<Z,QX\>QY,QY\>&&=\<Z,Y\>\<X,QY\>+\<Z,QX\>\nonumber\\
&&=\<Z,Y\>\<X,QY\>+\<Z,Y\>\<Y,QX\>+\<Z,QY\>\<QY,QX\>\nonumber\\
&&=\<\<X,Y\>Z,QY\>.\nonumber\eeq
Thus (\ref{n2.11n}) follows.
\end{proof}

\begin{lem}\label{div} Let $P\in T^2S.$
Let $X$ and $Y$ be vector fields and $f$ be a function. Then
\be
\div_g(PX)=\<P,DX\>+\<\div_gP,X\>,\label{divP}\ee
$$\div_g(fP)=f\div_gP+P^TDf.$$
\end{lem}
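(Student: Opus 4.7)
The plan is to prove both formulas by a direct pointwise computation from the definition $\<\div_g T,\a\>=\tr_g\ii_\a DT$ given at the start of Section~2, using the Leibniz rule for the covariant derivative $D$ and then rearranging the resulting trace contractions. I would fix a point $p\in S$ and work in an orthonormal frame $\{e_1,e_2\}$ of $TS$ that is geodesic at $p$ (so that $D_{e_i}e_j=0$ at $p$), which lets me replace tensorial contractions by ordinary sums $\sum_i\<D_{e_i}(\cdot),e_i\>$ without generating connection terms at the point under consideration.

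For the first identity, I would start from the fact that for any vector field $V\in TS$, $\div_g V(p)=\sum_i\<D_{e_i}V,e_i\>$ at $p$. Applying this to $V=PX$, where $P$ acts as a linear endomorphism of $TS$ via $\<PX,Z\>=P(Z,X)$, the Leibniz rule yields
$$D_{e_i}(PX)=(D_{e_i}P)X+P(D_{e_i}X)\qaq p.$$
Summing the inner products with $e_i$ splits $\div_g(PX)$ into two pieces. The first piece, $\sum_i\<(D_{e_i}P)X,e_i\>=\sum_i(D_{e_i}P)(e_i,X)$, is exactly $\tr_g\ii_X DP=\<\div_g P,X\>$ by the definition recalled above (after noting that the relevant symmetry/transpose conventions match, since the formula only involves tracing a slot against the metric). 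The second piece, $\sum_i\<P(D_{e_i}X),e_i\>=\sum_i P(e_i,D_{e_i}X)$, is by definition the full contraction $\<P,DX\>$ of the two $2$-tensors $P$ and $DX$.

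For the second identity, I would apply the same procedure to the $2$-tensor $fP$: at $p$, $D_{e_i}(fP)=(e_if)P+f\,D_{e_i}P$, hence
$$\<\div_g(fP),\a\>=\sum_i(e_if)\,P(\a,e_i)+f\sum_i(D_{e_i}P)(\a,e_i)=P(\a,Df)+f\<\div_g P,\a\>.$$
The second summand is evidently $f\<\div_g P,\a\>$. For the first, I recognize $P(\a,Df)$ as $\<P^TDf,\a\>$ via the transpose relation $\<P^TY,Z\>=\<Y,PZ\>=P(Z,Y)$; since $\a$ is arbitrary, this gives $\div_g(fP)=f\div_g P+P^TDf$.

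No deep obstacle is anticipated; the whole content of the lemma is the Leibniz rule followed by a rearrangement of indices. The only thing that needs to be handled carefully is bookkeeping of which slot of $P$ is being contracted with $X$ (respectively $Df$) and which slot is being traced against the metric, so that the abbreviations $PX$, $P^TDf$, $\<P,DX\>$, and $\<\div_g P,X\>$ are used consistently with the conventions adopted in Section~2. The geodesic frame trick makes the computation at $p$ indistinguishable from the Euclidean product rule, after which covariance of the conclusion promotes it to a pointwise identity on all of $S$.
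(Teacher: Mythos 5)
Your proof is correct and follows essentially the same route as the paper: a normal (geodesic) frame at $p$, the Leibniz rule, and identification of the two resulting trace terms with $\<\div_gP,X\>$ and $\<P,DX\>$ (respectively $f\div_gP$ and $P^TDf$). The only discrepancy is your slot convention $\<PX,Z\>=P(Z,X)$, which is the transpose of the one the paper's computation uses ($\<PX,Z\>=P(X,Z)$); under the paper's convention your identifications hold verbatim, and under yours the same computation proves the statement with $P$ replaced by $P^T$, which is an equivalent formulation since $P\in T^2S$ is arbitrary.
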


\begin{proof}
Let $\{e_1,e_2\}$ be a normal frame field at $p.$
We have
\beq
\div_g(PX)
=&&\sum \<D_{e_i}PX,e_i\>
=\sum e_iP(X,e_i)
=\sum [(D_{e_i}P)(X,e_i)+\<P D_{e_i}X,e_i\>]\nonumber\\
=&&\sum\limits_i DP(X,e_i,e_i)
+\sum\limits_{i,j}\<Pe_j,e_i\>DX(e_j,e_i)\nonumber\\
=&&\<P,DX\>+\<\div_gP,X\>\quad\mbox{at}\quad p,\nonumber
\eeq and
\beq
\<\div_g(fP),X\>
=&&\sum D(fP)(X,e_i,e_i)
=\sum D_{e_i}(fP)(X,e_i)\nonumber\\
=&&\sum [e_i(f)P(X,e_i)+f(D_{e_i}P)(X,e_i)]
\nonumber\\
=&&\<X,P^TDf+f\div_gP\>\quad\mbox{at}\quad p.
\nonumber
\eeq
\end{proof}

\begin{lem}\label{P} Let $P\in T^2S$ and
let $p\in S$ be given. Then the following identities hold.
\be
\<Qv,w\>P-\<Pv,w\>Q
=Qv\otimes Pw-Pv\otimes Qw,\label{2.4}
\ee
\be\<Qv,w\>QP+\<Pv,w\>\id=Qv\otimes QP w+Pv\otimes w \qfq v,\,\,w\in T_pS.\label{2.57}\ee
\end{lem}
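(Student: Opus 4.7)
The plan is to verify both identities by evaluating the two sides, which are $2$-tensors on $T_pS$, against an arbitrary test vector $z\in T_pS$. With the standard convention $(a\otimes b)(z)=\<a,z\>b$ (so that the bilinear form of $a\otimes b$ is $(X,Y)\mapsto\<a,X\>\<b,Y\>$), identity $(\ref{2.4})$ becomes the vector identity
$$\<Qv,w\>Pz-\<Pv,w\>Qz=\<Qv,z\>Pw-\<Pv,z\>Qw,$$
and $(\ref{2.57})$ becomes
$$\<Qv,w\>QPz+\<Pv,w\>z=\<Qv,z\>QPw+\<Pv,z\>w,$$
both required to hold for every $z\in T_pS$.

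The key auxiliary fact I would establish first is the $2$D identity
$$\<X,Y\>Z-\<X,Z\>Y=\<QY,Z\>QX\qfq X,Y,Z\in T_pS, \qquad(\star)$$
which I would derive from $(\ref{n2.11n})$ by instantiating it with $(X,Y,Z)$ and then with $(X,Z,Y)$, subtracting, and using the identities $Q^T=-Q$, $Q^2=-\Id$ recorded just after $(\ref{1.3n})$ together with the fact that in a $2$D space the three vectors $QX,QY,QZ$ are linearly dependent with coefficients given by $2\times2$ determinants. Equivalently, $(\star)$ may be checked directly in the positively oriented orthonormal basis $\{e_1,e_2\}$ used to define $Q$.

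Given $(\star)$, the first identity is handled as follows. Rearranging the target yields $\<Qv,w\>Pz-\<Qv,z\>Pw=\<Pv,w\>Qz-\<Pv,z\>Qw$, which by linearity of $P$ and $Q$ reads $P(\<Qv,w\>z-\<Qv,z\>w)=Q(\<Pv,w\>z-\<Pv,z\>w)$. Applying $(\star)$ with $X=Qv$ gives the left side $=P(-\<Qw,z\>v)=-\<Qw,z\>Pv$ (using $Q^2=-\Id$), and applying $(\star)$ with $X=Pv$ gives the right side $=Q(\<Qw,z\>QPv)=-\<Qw,z\>Pv$; hence both sides agree. The proof of $(\ref{2.57})$ is parallel: after rearrangement it reads $QP(\<Qv,w\>z-\<Qv,z\>w)=-(\<Pv,w\>z-\<Pv,z\>w)$, and two applications of $(\star)$ collapse both sides to $-\<Qw,z\>QPv$.

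The only real obstacle is bookkeeping: fixing once and for all the convention for $a\otimes b$ and for the induced action of $P\in T^2S$ on vectors, then carefully tracking the signs arising from $Q^T=-Q$ and $Q^2=-\Id$. Once $(\star)$ is in place, each identity is a couple of lines of algebra.
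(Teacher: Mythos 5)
Your proof is correct, but it follows a genuinely different route from the paper's. You reduce each tensor identity to a vector identity by testing against an arbitrary $z\in T_pS$ (with the convention $(a\otimes b)z=\<a,z\>b$, which is indeed the convention the paper uses implicitly), and then invoke your auxiliary identity $(\star)$: $\<X,Y\>Z-\<X,Z\>Y=\<QY,Z\>QX$. Note that $(\star)$ is nothing but (\ref{n2.11n}) with the roles of its first two arguments exchanged ($\<Y,X\>Z=\<Z,X\>Y+\<Z,QY\>QX$), so it needs no separate two-step derivation or determinant argument; your fallback of checking it in a positively oriented orthonormal basis is also fine. With $(\star)$ in hand, both (\ref{2.4}) and (\ref{2.57}) collapse, as you show, to the common value $-\<Qw,z\>Pv$ (resp.\ $-\<Qw,z\>QPv$), and you treat the two identities in parallel. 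The paper instead exploits trilinearity of $\Lambda(v,w,P)=\<Qv,w\>P-\<Pv,w\>Q-Qv\otimes Pw+Pv\otimes Qw$ and verifies $\Lambda=0$ on basis elements $v=e_i$, $w=e_j$, $P=e_k\otimes e_l$ through a case analysis using the wedge/determinant representation of $Q$, and then deduces (\ref{2.57}) from (\ref{2.4}) by the substitution $(Qv,Qw,-QPQ)$ followed by right multiplication with $Q$. Your argument buys brevity and reuses the already proved Lemma giving (\ref{n2.11n}), avoiding the index bookkeeping; the paper's argument is a self-contained basis computation that does not lean on that lemma and makes the passage from (\ref{2.4}) to (\ref{2.57}) structural rather than a second parallel computation.
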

\begin{proof} Set
$$\Lambda(v,w,P)=\<Qv,w\>P
-\<Pv,w\>Q
-Qv\otimes Pw
+Pv\otimes Qw.$$
Let $\{e_1,e_2\}$ be an orthonormal basis of $T_pS$ with positive orientation. Since $\Lambda(v,w,P)$ is linear with respect to $v,$ $w,$ and $P,$ respectively, for (\ref{2.4}) it suffices
to prove $\Lambda(v,w,P)=0$ for $v=e_i$, $w=e_j$, and $P=e_k\otimes e_l.$ From (\ref{1.3n}), we obtain
$$\<Qe_i,e_j\>=-\<e_1\wedge e_2,e_i\otimes e_j\>=\<e_1\wedge e_2,e_j\otimes e_i\>\qfq 1\leq i,\,j\leq2.$$ Then we compute
\beq
&&\Lambda(e_i,e_j,e_k\otimes e_l)(e_m,e_n)=\<\Lambda(e_i,e_j,e_k\otimes e_l)e_m,e_n\>\nonumber\\
&&=\<Qe_i,e_j\>\delta_{km}\delta_{ln}
-\delta_{ki}\delta_{lj}\<Qe_m,e_n\>
-\<Qe_i,e_m\>\delta_{kj}\delta_{ln}
+\delta_{ki}\delta_{lm}\<Qe_j,e_n\>
\nonumber\\
&&=
-\<e_1\wedge e_2,\,\,(
\delta_{km}\delta_{ln}e_i\otimes e_j
-\delta_{ki}\delta_{lj}e_m\otimes e_n
-\delta_{kj}\delta_{ln}e_i\otimes e_m
+\delta_{ki}\delta_{lm}e_j\otimes e_n)\>
\nonumber\\
&&=
-\<e_1\wedge e_2,\,\,[
\delta_{ln}e_i\otimes (\delta_{km}e_j-\delta_{kj}e_m)
-\delta_{ki}e_n\otimes(\delta_{lm}e_j-\delta_{lj}e_m)
]\>.\label{2.167}\eeq
One observes (\ref{2.167}) to have
\beq
\Lambda(e_i,e_j,e_k\otimes e_l)(e_m,e_n)
=-\Lambda(e_i,e_m,e_k\otimes e_l)(e_j,e_n).
\nonumber
\eeq
It immediately follows that
$$\Lambda(e_i,e_j,e_k\otimes e_l)(e_j,e_n)=0\qfq j=m.$$

Let $i=j\not=m.$ From (\ref{2.167}),  we have
\beq
\Lambda(e_i,e_i,e_k\otimes e_l)(e_m,e_n)
&&=
-e_1\wedge e_2[
\delta_{ln}e_i\otimes (\delta_{km}e_i-\delta_{ki}e_m)
-\delta_{ki}e_n\otimes(\delta_{lm}e_i-\delta_{li}e_m)
]
\nonumber\\
&&=
\delta_{ki}e_1\wedge e_2[
\delta_{ln}e_i\otimes e_m
+e_n\otimes(\delta_{lm}e_i-\delta_{li}e_m)
].
\nonumber
\eeq
If $n=i$, then
\beq
\Lambda(e_i,e_i,e_k\otimes e_l)(e_m,e_i)
&&=
\delta_{ki}\delta_{lm}e_1\wedge e_2(e_i\otimes e_i)=0.
\nonumber
\eeq
If $n\ne i$, then $n=m$ and
\beq
\Lambda(e_i,e_i,e_k\otimes e_l)(e_m,e_m)
&&=
\delta_{ki}e_1\wedge e_2[
\delta_{lm}(e_i\otimes e_m
+e_m\otimes e_i)
-\delta_{li}e_m\otimes e_m
]=0.
\nonumber
\eeq

Let $i\not=j$ and $j\not=m.$ Then $i=m.$ It follows from (\ref{2.167}) that
\beq
&&\Lambda(e_i,e_j,e_k\otimes e_l)(e_m,e_n)\nonumber\\
&&=-\delta_{km}
\<e_1\wedge e_2,\,\,
\delta_{ln} e_m\otimes e_j
-e_n\otimes(\delta_{lm}e_j-\delta_{lj}e_m)
\>\nonumber\\
&&=\left\{\begin{array}{l}-\delta_{km}\delta_{lj}
\<e_1\wedge e_2,\,\,
 e_m\otimes e_j
+e_j\otimes e_m
\>=0\quad\mbox{if}\quad n=j,\\
-\delta_{km}\delta_{lm}
\<e_1\wedge e_2,\,\,e_m\otimes e_j
-e_m\otimes e_j
\>=0\quad\mbox{otherwise}\quad n=m.\end{array}\right.\nonumber\eeq

Using (\ref{2.4}), we have
\beq
\Lambda(Qv,Qw,-QP Q)=-\<Qv,w\>QP Q
-\<Pv,w\>Q
+v\otimes QP w
-QPv\otimes w
=0.
\nonumber
\eeq Thus (\ref{2.57}) follows from $\Lambda(Qv,Qw,-QP Q)Q=0.$
\end{proof}

We further assume that
\be\det(\a_t,\a_s,\n)>0\qfq\overline{S}.\label{as}\ee For otherwise, we replace $\a(t,s)$ with $\a(-t,s).$

Let $x\in\Ga_b.$ Since $\kappa(x)=0$ and $D\kappa(x)\not=0,$ from \cite[Lemma 2.6]{Yao2020}, there exist vector fields $X_1,$ $X_2$ in a neighborhood of $x$
satisfying $\nabla\n X_i=\lam_iX_i,$ where $\lam_i$ are the principal curvatures. Clearly we may extend the vector fields $X_i$ to the region $\overline{S}_{b-\varepsilon}$ when $\varepsilon>0$ is given small.
We assume that $X_i$ are vector fields such that
\be\nabla\n X_i=\lam_iX_i,\quad|X_i|=1,\quad\<X_1,X_2\>=0\qfq x\in\overline{S}_{b-\varepsilon},\label{X2.82}\ee where
$$\lam_1>0\qfq x\in \overline{S}_{b-\varepsilon},$$
$$\lam_2<0\qfq x\in S_{b-\varepsilon},\quad\lam_2=0\qfq x\in\Ga_b.$$

\begin{lem} For given $\varepsilon>0$ small
$$X_2(\lam_2)\not=0\qfq x\in S_{b-\varepsilon}.$$
\end{lem}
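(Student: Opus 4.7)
The plan is to first pin down the conclusion on the boundary curve $\Gamma_b$ itself, and then propagate it to the full tubular region $S_{b-\varepsilon}$ by a continuity/compactness argument, shrinking $\varepsilon>0$ if necessary. So I would begin by exploiting the fact that on $\Gamma_b$ one has $\lambda_2\equiv 0$, and differentiate the relation $\kappa=\lambda_1\lambda_2$ along $M$: this yields $D\kappa=\lambda_1\,D\lambda_2$ on $\Gamma_b$. Combined with $\lambda_1>0$ on $\overline{S}_{b-\varepsilon}$ and the hypothesis (\ref{kappa0}) that $D\kappa\neq 0$ on $\Gamma_b$, this already forces $D\lambda_2\neq 0$ at every point of $\Gamma_b$.

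Next I would exploit the non-characteristic assumption (\ref{Pi2.801}). Decompose $\alpha_t=aX_1+bX_2$ at a point of $\Gamma_b$. On $\Gamma_b$ the principal curvature $\lambda_2$ vanishes, so
$$\Pi(\alpha_t,\alpha_t)=a^{2}\lambda_1+b^{2}\lambda_2=a^{2}\lambda_1,$$
which is strictly positive by (\ref{Pi2.801}); hence $a\neq 0$. On the other hand, since $\Gamma_b$ is the level set $\{\lambda_2=0\}$ and $\alpha_t$ is tangent to $\Gamma_b$, differentiating $\lambda_2$ along the curve gives $\alpha_t(\lambda_2)=0$, i.e.
$$aX_1(\lambda_2)+bX_2(\lambda_2)=0\qoq \Gamma_b.$$

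Now I would argue by contradiction: suppose $X_2(\lambda_2)(p)=0$ at some $p\in\Gamma_b$. Combined with $a\neq 0$, the displayed identity forces $X_1(\lambda_2)(p)=0$ as well. Since $\{X_1,X_2\}$ is an orthonormal frame of $T_pS$, this means $D\lambda_2(p)=0$, contradicting the conclusion of the first paragraph. Therefore $X_2(\lambda_2)\neq 0$ everywhere on $\Gamma_b$.

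Finally, by continuity of the map $x\mapsto X_2(\lambda_2)(x)$ and compactness of $\Gamma_b$, there is an open neighborhood $\mathcal{U}\subset\overline{S}$ of $\Gamma_b$ on which $X_2(\lambda_2)$ does not vanish. Because $\overline{S}_{b-\varepsilon}$ shrinks down to $\Gamma_b$ as $\varepsilon\to 0^+$, we can take $\varepsilon>0$ small enough that $\overline{S}_{b-\varepsilon}\subset\mathcal{U}$, and the conclusion follows. The only subtle point is the first contradiction step, where the non-characteristic condition (\ref{Pi2.801}) is essential: without the positivity of $\Pi(\alpha_t,\alpha_t)$ one could not rule out that $\alpha_t$ lies in the kernel direction $X_2$, and the argument breaks down. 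Everything else is a routine use of the two curvature hypotheses and continuity.
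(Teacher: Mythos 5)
Your proof is correct and follows essentially the same route as the paper: both reduce to showing $X_2(\lam_2)\neq0$ on $\Ga_b$ (then extend by continuity), using $\kappa=\lam_1\lam_2$ with $\lam_2=0$ on $\Ga_b$ so that $D\kappa=\lam_1 D\lam_2\neq0$ there, the tangency of $\a_t$ to the zero-curvature curve so that $D\lam_2\perp\a_t$, and the noncharacteristic condition (\ref{Pi2.801}) to rule out $\a_t$ being parallel to $X_2$. The only difference is cosmetic: you argue by contradiction through the frame decomposition $\a_t=aX_1+bX_2$, while the paper computes directly $X_2(\lam_2)=\frac{\eta}{\lam_1}\<X_2,Q\a_t\>\neq0$.
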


\begin{proof} It will suffice to prove
$$X_2(\lam_2)\not=0\qfq x\in\Ga_b.$$

First, we claim that
$$\<Q\a_t,X_2\>(x)\not=0\qfq x\in\Ga_b.$$ If not, then $\<Q\a_t,X_2\>(x)=0$ implies that $X_2=\eta \a_t$ with $\eta\not=0,$ and thus
$$\eta^2\Pi(\a_t,\a_t)=\<\nabla_{X_2}\n,X_2\>=\lam_2(x)=0,$$ which contradicts  the assumption $\Pi(\a_t,\a_t)\not=0.$

In addition, assumption (\ref{kappa0}) implies
\be\<D\kappa,\a_t\>=0,\quad D\kappa=\eta Q\a_t\qfq x\in\Ga_b,\label{Pii}\ee   for some $\eta\not=0.$
 Thus we obtain
\be X_2(\lam_2)=\frac1{\lam_1}X_2(\kappa)=\frac\eta{\lam_1}\<X_2,Q\a_t\>\not=0\qfq x\in\Ga_b.\label{X21}\ee
\end{proof}

We assume that
\be X_2(\lam_2)>0\qfq x\in\overline{S}_{b-\varepsilon}.\label{Xlam2}\ee For otherwise, we replace $X_2$ with $-X_2.$ Furthermore, we assume that $X_1,$ $X_2$ has positive orientation. For otherwise, we replace $X_1$ with $-X_1.$ Thus
\be QX_2=X_1,\quad QX_1=-X_2\qfq x\in\overline{S}_{b-\varepsilon}.\label{2.76}\ee

 Let $Y\in TS_{b-\varepsilon}$ be given. Define
\be
\L_{Y}V=e^{-s\lam_2}[(\div_gQ\nabla\n V+\<V,Y\>)X_1+\lam_2(\div_gV)X_2],\quad\label{2.73}\ee for $V\in TS_{b-\varepsilon}$ and $s>0.$
For given $V,$ $W\in TS_{b-\varepsilon}$, we have
\beq
\<W,\L_{Y}V\>
&&=e^{-s\lam_2}
[(\div_gQ\nabla\n V+\<V,Y\>)\<W,X_1\>+\lam_2(\div_gV)\<W,X_2\>]\nonumber\\
&&=
\div_g(e^{-s\lam_2}\<W,X_1\>Q\nabla\n V+e^{-s\lam_2}\lam_2\<W,X_2\>V)-Q\nabla\n V[e^{-s\lam_2}\<W,X_1\>]\nonumber\\
&&\quad
-V[e^{-s\lam_2}\lam_2\<W,X_2\>]
+e^{-s\lam_2}\<V,Y\>\<W,X_1\>\nonumber\\
&&=\<\L_{Y}^*W,V\>+\div_g(e^{-s\lam_2}\<W,X_1\>Q\nabla\n V+e^{-s\lam_2}\lam_2\<W,X_2\>V)\label{2.08+}
\eeq    for $x\in S_{b-\varepsilon},$
where
\beq
e^{s\lam_2}\<\L_{Y}^*W,V\>
&&=-\<DW,X_1\otimes Q\nabla\n V+\lam_2X_2\otimes V\>+s\<Q\nabla\n V,D\lam_2\>\<W,X_1\>\nonumber\\
&&\quad+s\lam_2\<V,D\lam_2\>\<W,X_2\>-\<W,D_{Q\nabla\n V}X_1\>-\<V,D\lam_2\>\<W,X_2\>\nonumber\\
&&\quad
-\lam_2\<W,D_VX_2\>+\<V,Y\>\<W,X_1\>\qfq x\in S_{b-\varepsilon},\label{2.79}
\eeq where the following formulas have been used
$$DW(Z_1,Z_2)=\<DW,Z_1\otimes Z_2\>\qfq Z_1,\,\,Z_2\in TS_{b-\varepsilon}.$$
On the other hand, from (\ref{2.73}) and Lemma \ref{div}, we obtain
\beq
e^{s\lam_2}\<V,\L_{Y}W\>&&=\<V,\,\,(\div_gQ\nabla\n W+\<W,Y\>)X_1+\lam_2(\div_gW)X_2\>\nonumber\\
&&
=\<DW,\<V,X_1\>Q\nabla\n+\lam_2\<V,X_2\>\id\>\nonumber\\
&&\quad
+\<\div_gQ\nabla\n+Y,W\>\<V,X_1\>\qfq x\in S_{b-\varepsilon}.\label{2.80}
\eeq

\begin{pro} For given $\varepsilon>0$ small, the following identities hold true.

$(i)$\,\,\,For any $x\in S_{b-\varepsilon},$
\be\<W,-\L_YV\>=\<-\L_{Y}^*W,V\>-\div_g(e^{-s\lam_2}\<W,X_1\>Q\nabla\n V+e^{-s\lam_2}\lam_2\<W,X_2\>V);\label{2.08}\ee

$(ii)$\,\,\,For any $X\in TS_{b-\varepsilon}$ with $|X|=1,$
\beq&&\<X_2,X\>\<(\<W,X_1\>Q\nabla\n V+\lam_2\<W,X_2\>V),\,\,X\>\nonumber\\
&&=\lam_2\<V,X\>\<W,X\>-\Pi(QX,QX)\<V,X_1\>\<W,X_1\>\qfq x\in S_{b-\varepsilon}.\label{Pi26}\eeq

\end{pro}

\begin{proof} (\ref{2.08}) follows from (\ref{2.08+}).

Next, we prove (\ref{Pi26}).
Using (\ref{n2.11n}) where $X=X_2,$ $Y=X,$ and $Z=V,$ we have
$$\<X_2,X\>V=\<V,X\>X_2+\<V,X_1\>QX\qfq x\in S_{b-\varepsilon}.$$ Thus we obtain
\beq&&\<X_2,X\>(\<W,X_1\>Q\nabla\n V+\lam_2\<W,X_2\>V)\nonumber\\
&&=\<W,X_1\>Q\nabla\n(\<V,X\>X_2+\<V,X_1\>QX)+\lam_2\<W,X_2\>(\<V,X\>X_2+\<V,X_1\>QX)\nonumber\\
&&=\lam_2\<W,X_1\>\<V,X\>X_1+\<W,X_1\>\<V,X_1\>Q\nabla\n QX+\lam_2\<W,X_2\>(\<V,X\>X_2+\<V,X_1\>QX)\nonumber\\
&&=\lam_2\<V,X\>W+\<W,X_1\>\<V,X_1\>Q\nabla\n QX+\lam_2\<W,X_2\>\<V,X_1\>QX\qfq x\in S_{b-\varepsilon}.\nonumber\eeq
It follows by $\<QX,X\>=0$ that
\beq&&\<X_2,X\>\<(\<W,X_1\>Q\nabla\n V+\lam_2\<W,X_2\>V),\,\,X\>\nonumber\\
&&=\lam_2\<V,X\>\<W,X\>-\Pi(QX,QX)\<V,X_1\>\<W,X_1\>\qfq x\in S_{b-\varepsilon}.\nonumber\eeq
\end{proof}

\begin{pro}\label{p2.2} Let $Y\in TS$ and let $\L_{Y}$ be given in $(\ref{2.73}).$ Then there exists a constant $\si_s>0$ such that
\be -e^{s\lam_2}\<\L_{Y}W+L^*_YW,W\>\geq\si_s|W|^2\qfq W\in TS_{b-\varepsilon},\quad x\in S_{b-\varepsilon}\label{2.86}\ee
 for $s>0$ large and $\varepsilon>0$ small enough.
\end{pro}

\begin{proof} Using Lemma \ref{div}, (\ref{2.73}), (\ref{2.76}), and (\ref{2.57}), we obtain
\beq &&e^{s\lam_2}\<\L_{Y}W,W\>=(\div_gQ\nabla\n W+\<W,Y\>)\<W,X_1\>+\lam_2(\div_gW)\<W,X_2\>\nonumber\\
&&=\<DW,\,\<W,X_1\>Q\nabla\n+\lam_2\<W,X_2\>\id\>+\<W,\div_g(Q\nabla\n)+Y\>\<W,X_1\>\nonumber\\
&&=\<DW,\,X_1\otimes Q\nabla\n W+\lam_2X_2\otimes W\>+\<W,\div_g(Q\nabla\n)+Y\>\<W,X_1\>.\label{2.83}\eeq
It follows from (\ref{2.79}) and (\ref{2.83}) that
\beq &&e^{s\lam_2}\<\L_{Y}W+\L_{Y}^*W,W\>=s\<Q\nabla\n W,D\lam_2\>\<W,X_1\>+s\lam_2\<W,D\lam_2\>\<W,X_2\>\nonumber\\
&&\quad-\<W,D_{Q\nabla\n W}X_1\>-\<W,D\lam_2\>\<W,X_2\>
-\lam_2\<W,D_WX_2\>+\<W,Y\>\<W,X_1\>\nonumber\\
&&\quad+\<W,\div_g(Q\nabla\n)+Y\>\<W,X_1\>\nonumber\\
&&=-sX_2(\lam_2)[\lam_1\<W,X_1\>^2-\lam_2\<W,X_2\>^2]-X_2(\lam_2)\<W,X_2\>^2\nonumber\\
&&\quad+2s\lam_2X_1(\lam_2)\<W,X_1\>\<W,X_2\>-X_1(\lam_2)\<W,X_1\>\<W,X_2\>\nonumber\\
&&\quad-\<W,D_{Q\nabla\n W}X_1\>
-\lam_2\<W,D_WX_2\>+\<W,Y\>\<W,X_1\>\nonumber\\
&&\quad+\<W,\div_g(Q\nabla\n)+Y\>\<W,X_1\>.\label{2.85}\eeq

Since $\lam_2=\O(\varepsilon)$ on $S_{b-\varepsilon},$ (\ref{2.86}) follows from (\ref{Xlam2}) and (\ref{2.85}), when $s>0$ is large enough and $\varepsilon>0$ is small enough, respectively.
\end{proof}

\begin{lem}For any $X,$ $Y\in TS,$ the following holds.
\be\nabla\vec{n}[X,Y]=D_X\nabla\vec{n}Y-D_Y\nabla\vec{n}X,\label{2.92}\ee
\be \div_g[X,Y]=X\div_gY-Y\div_gX,\label{div[X,Y]}\ee
\end{lem}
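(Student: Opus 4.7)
For (\ref{2.92}), I would view $\vec{n}$ as an $\R^3$-valued function on $M$, so that for any tangent field $Z$ the expression $\nabla\vec{n}\,Z$ is literally the directional derivative $Z(\vec{n})$. Flatness of $\R^3$ then gives the trivial identity
$$X(Y\vec{n})-Y(X\vec{n})=[X,Y]\vec{n}=\nabla\vec{n}\,[X,Y],$$
whose right-hand side is tangent to $S$ since $|\vec{n}|=1$. The plan is to take tangential parts on the left using the Gauss decomposition $\nabla_XW=D_XW+\langle\nabla_XW,\vec{n}\rangle\vec{n}$, valid for any tangent field $W$. Differentiating $\langle\vec{n},\vec{n}\rangle=1$ twice in the directions $X$ and then $Y$ yields $\langle\nabla_X(\nabla\vec{n}\,Y),\vec{n}\rangle=-\langle\nabla\vec{n}\,X,\nabla\vec{n}\,Y\rangle$, which is symmetric in $X,Y$. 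Hence the normal components cancel upon subtracting the $(X,Y)\leftrightarrow(Y,X)$ counterpart, and the tangential components deliver (\ref{2.92}).

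For (\ref{div[X,Y]}) the cleanest route is to test both sides against the Riemannian area form $\omega_g$ on $S$. Using the defining property $\mathcal{L}_Z\omega_g=(\div_gZ)\omega_g$ together with the commutator identity $\mathcal{L}_{[X,Y]}=[\mathcal{L}_X,\mathcal{L}_Y]$ on differential forms, I obtain
$$(\div_g[X,Y])\,\omega_g=\mathcal{L}_X\bigl((\div_gY)\omega_g\bigr)-\mathcal{L}_Y\bigl((\div_gX)\omega_g\bigr)=\bigl(X\div_gY-Y\div_gX\bigr)\omega_g,$$
since the cross terms $(\div_gX)(\div_gY)\omega_g$ appear in both bracketed expressions with the same sign and cancel. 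Dividing by $\omega_g$ yields (\ref{div[X,Y]}). A direct alternative is to work in a normal frame $\{e_1,e_2\}$ at a fixed point $p\in S$, where $D_{e_j}e_i=0$ at $p$, expand $\div_g[X,Y]=\sum_i\langle D_{e_i}(D_XY-D_YX),e_i\rangle$, and reorganize the second derivatives to match $X\div_gY-Y\div_gX$, with curvature contributions cancelling by symmetries of the Riemann tensor.

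I anticipate no genuine obstacle: the only substantive content is the self-adjointness of the shape operator $\nabla\vec{n}$ (equivalently, symmetry of $\Pi$), which drives the cancellation of normal components in the first identity; the second identity is the standard fact that $\div_g$ measures the infinitesimal distortion of the volume form, and so it inherits the $[\cdot,\cdot]$-versus-commutator relation automatically. Neither step requires the subtle estimates from earlier in this section.
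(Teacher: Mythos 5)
Your argument is correct. For (\ref{2.92}) it is essentially the paper's proof: both regard $\nabla\vec{n}\,Y=\nabla_Y\vec{n}$ as an ambient $\R^3$-derivative, use flatness to get $\nabla_X\nabla_Y\vec{n}-\nabla_Y\nabla_X\vec{n}=\nabla_{[X,Y]}\vec{n}$, and pass to tangential parts via the Gauss decomposition; your explicit check that the normal components equal $-\langle\nabla_X\vec{n},\nabla_Y\vec{n}\rangle$ and cancel by symmetry is just a spelled-out version of the paper's observation that the bracketed term is already tangent. (One cosmetic point: that cancellation comes from differentiating $|\vec{n}|^2=1$ and the symmetry of the Euclidean inner product, not from self-adjointness of the shape operator, so symmetry of $\Pi$ is not actually the driving fact.) For (\ref{div[X,Y]}) your route genuinely differs from the paper's: you argue pointwise from $\mathcal{L}_Z\omega_g=(\div_g Z)\,\omega_g$ and $\mathcal{L}_{[X,Y]}=[\mathcal{L}_X,\mathcal{L}_Y]$, with the quadratic terms cancelling, whereas the paper proves the identity in the weak sense by pairing $\div_g[X,Y]$ with a test function $v\in\CC^1_0(S)$ and integrating by parts twice. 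The Lie-derivative argument is shorter and manifestly intrinsic; the paper's duality argument avoids any appeal to the volume-form characterization of divergence and uses only the divergence theorem, which keeps it in the elementary toolkit used throughout Section 2. Your sketched normal-frame alternative would need more care (controlling second derivatives of the frame before invoking curvature symmetries), but since the Lie-derivative proof is complete this is not a gap.
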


\begin{proof} A direct calculation yields
$$D_X\nabla\vec{n}Y-D_Y\nabla\vec{n}X=\nabla_X\nabla_Y\vec{n}-\nabla_Y\nabla_X\vec{n}-\<\nabla_X\nabla_Y\vec{n}-\nabla_Y\nabla_X\vec{n},\n\>=\nabla_{[X,Y]}\vec{n}.$$

For $v\in\CC^1_0(S)$, we have
\beq
-\int_Sv\div_g[X,Y]dg
&&=\int_S[X,Y]vdg
=\int_S(XYv-YXv)dg\nonumber\\
&&=\int_S[-(Yv)\div_gX+(Xv)\div_gY]dg
=\int_Sv(Y\div_gX-X\div_gY)dx.\nonumber
\eeq Then (\ref{div[X,Y]}) follows.
\end{proof}

Let $\Phi\in TS$ be given such that
$$\Pi(\Phi,\Phi)\ne0\qfq p\in \overline{S}_{b-\varepsilon}.$$  Define
\be RV=D_VQ\nabla\n\Phi-D_{Q\nabla\n V}\Phi\qfq V\in TS_{b-\varepsilon}.\label{2.94}\ee Set
\be h_1=\frac{\<R\Phi,Q\Phi\>}{\Pi(\Phi,\Phi)},\quad
h_2=\frac{\<RQ\Phi,Q\Phi\>
-h_1\<\nabla\n Q\Phi,\Phi\>}{|\Phi|^2},\label{h2.31}\ee
\be Z=\frac1{|\Phi|^2}(R-h_1Q\nabla\n-h_2\id)^T\Phi.\label{2.96}\ee

\begin{lem}The following formula is true.
\be R=h_1Q\nabla\n+h_2\id+Z\otimes\Phi.\label{2.97}\ee
\end{lem}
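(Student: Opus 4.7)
The plan is to view (\ref{2.97}) as a decomposition of the linear operator $R$ acting on the two-dimensional space $T_pS$. Since $\Pi(\Phi,\Phi)\neq 0$ forces $\Phi\neq 0$, the pair $\{\Phi,Q\Phi\}$ is an orthogonal basis of $T_pS$ with $|Q\Phi|=|\Phi|$. Setting $A:=R-h_1Q\nabla\n-h_2\id$, the identity (\ref{2.97}) is equivalent to the rank-one statement $AV=\<Z,V\>\Phi$ for all $V$, which in a two-dimensional space reduces to the single scalar condition $\<AV,Q\Phi\>=0$ for every $V$.

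By linearity of $V\mapsto\<AV,Q\Phi\>$ it suffices to verify this vanishing at the basis vectors $V=\Phi$ and $V=Q\Phi$. Using the orthogonality $\<Qa,Qb\>=\<a,b\>$ (which follows from $Q^T=-Q$ and $Q^2=-\id$) together with $\<\Phi,Q\Phi\>=0$, one has $\<Q\nabla\n\Phi,Q\Phi\>=\Pi(\Phi,\Phi)$ and $\<Q\nabla\n Q\Phi,Q\Phi\>=\<\nabla\n Q\Phi,\Phi\>$. Substituting, the condition at $V=\Phi$ becomes
\[
\<R\Phi,Q\Phi\>-h_1\Pi(\Phi,\Phi)=0,
\]
and the one at $V=Q\Phi$ becomes
\[
\<RQ\Phi,Q\Phi\>-h_1\<\nabla\n Q\Phi,\Phi\>-h_2|\Phi|^2=0.
\]
Both equalities hold by construction: $h_1$ and $h_2$ are defined in (\ref{h2.31}) precisely so as to make them true.

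Once $AV\parallel\Phi$ for all $V$, I would write $AV=c(V)\Phi$ for a linear functional $c$ on $T_pS$, and the Riesz representation gives $c(V)=\<Z,V\>$ for a unique $Z\in T_pS$. Taking transposes yields $A^T\Phi=|\Phi|^2Z$, which is exactly the formula (\ref{2.96}) defining $Z$; this confirms (\ref{2.97}) under the tensor-product convention $(Z\otimes\Phi)V=\<Z,V\>\Phi$ used in the paper (cf.\ the convention determined by (\ref{2.167})). No genuine obstacle arises in this argument; the only care needed is the bookkeeping of the signs coming from $Q^T=-Q$ when expanding inner products against $Q\Phi$.
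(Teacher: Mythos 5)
Your proposal is correct and follows essentially the same route as the paper: both reduce the claim to the vanishing of the $Q\Phi$-component of $A=R-h_1Q\nabla\n-h_2\id$ on the orthogonal basis $\{\Phi,Q\Phi\}$, which holds by the very definitions of $h_1$ and $h_2$ in (\ref{h2.31}), and then identify the remaining $\Phi$-component with $\<Z,\cdot\>\Phi$ via the definition (\ref{2.96}) of $Z$. Your verification of the two scalar conditions and of the tensor-product convention is just a more explicit write-up of the paper's frame decomposition argument.
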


\begin{proof} We have
$$\<(R-h_1Q\nabla\n-h_2\id)\Phi,Q\Phi\>=h_1\Pi(\Phi,\Phi)-h_1\Pi(\Phi,\Phi)=0.$$
Since $\frac{Q\Phi}{|\Phi|},$ $\frac{\Phi}{|\Phi|}$ forms an orthonormal frame, it follows that
\beq
&&(R-h_1Q\nabla\n-h_2\id)W=\frac1{|\Phi|^2}\<(R-h_1Q\nabla\n-h_2\id)W,\Phi\>\Phi\nonumber\\
&&\quad+\frac1{|\Phi|^4}\<(R-h_1Q\nabla\n-h_2\id)(\<W,\Phi\>\Phi+\<W,Q\Phi\>Q\Phi,\,\,Q\Phi\>Q\Phi\nonumber\\
&&=\<W,Z\>\Phi\qfq W\in TS_{b-\varepsilon},\nonumber
\eeq  where $\<\Phi,Q\Phi\>=0.$
Thus (\ref{2.97}) follows.
\end{proof}

\begin{pro} Let $V\in TS_{b-\varepsilon}$ be a solution to problem $(\ref{V2.4}).$ Let $\Phi\in TS_{b-\varepsilon}$ be given. Then
\be \L_{Z}[\Phi,V]=e^{-s\lam_2}[(\Phi f_1-h_1f_1-h_2f_2+\<H,V\>)X_1
+\lam_2(\Phi f_2-\<D\div_g\Phi,V\>)X_2],\label{2.99}\ee where $Z$ is given in $(\ref{2.96})$ and
$$H=\nabla\n QD\div_g\Phi+h_1\div_gQ\nabla\n-\div_gR-\ii_ZD\Phi.$$
\end{pro}

\begin{proof} From (\ref{2.92}),  (\ref{2.94}), (\ref{div[X,Y]}), (\ref{divP}), and (\ref{2.97}), we have
\beq\div_gQ\nabla\n[\Phi,V]&&=\div_gQ(D_\Phi\nabla\n V-D_V\nabla\n\Phi)=\div_g(D_\Phi Q\nabla\n V-D_V Q\nabla\n\Phi)\nonumber\\
&&=\div_g[\Phi,Q\nabla\n V]-\div_gRV\nonumber\\
&&=\Phi\div_gQ\nabla\n V-Q\nabla\n V\div_g\Phi-\<h_1Q\nabla\n+h_2\id+Z\otimes\Phi,\,\,DV\>\nonumber\\
&&\quad-\<\div_gR,V\>\nonumber\\
&&=\Phi f_1-Q\nabla\n V\div_g\Phi-h_1(\<Q\nabla\n,DV\>+\<\div_gQ\nabla\n,V\>)-h_2\div_gV\nonumber\\
&&\quad-\<D_\Phi V-D_V\Phi,Z\>+\<h_1\div_gQ\nabla\n,V\>-D\Phi(Z,V)\nonumber\\
&&=-\<[\Phi,V],Z\>+\<\nabla\n QD\div_g\Phi+h_1\div_gQ\nabla\n-\ii_ZD\Phi-\div_gR,\,\,V\>\nonumber\\
&&\quad+\Phi f_1-h_2f_2,\nonumber\eeq and
$$\div_g[\Phi,V]=\Phi\div_gV-V\div_g\Phi=\Phi f_2-\<D\div_g\Phi,V\>.$$
Thus (\ref{2.99}) follows.
\end{proof}

\begin{lem}\label{l3.4}   $\<X_2,Q\a_t\><0$ for $x\in\Ga_b,$ where $X_2$ is given in $(\ref{X2.82}).$
\end{lem}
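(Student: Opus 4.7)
The plan is to combine three ingredients already available in the setup and reduce everything to the sign of a single scalar. First, equation (\ref{X21}) gives the explicit identity
$$X_2(\lam_2)=\frac{\eta}{\lam_1}\<X_2,Q\a_t\>\qfq x\in\Ga_b,$$
where $\eta$ is the scalar from the decomposition $D\kappa=\eta Q\a_t$ in (\ref{Pii}). By the normalization (\ref{Xlam2}) we have $X_2(\lam_2)>0$ on $\Ga_b$, and by construction $\lam_1>0$. Thus the sign of $\<X_2,Q\a_t\>$ is controlled entirely by the sign of $\eta$, and the whole lemma is reduced to proving $\eta<0$.

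To pin down the sign of $\eta$, the idea is to exploit the behaviour of $\kappa$ transverse to $\Ga_b$. Since $\Ga_b=\{\a(t,b)\}$ is a level set of $\kappa$ (with value $0$) while $\kappa<0$ for $s\in(0,b)$ by (\ref{kappa+}), the one-variable function $s\mapsto\kappa(\a(t,s))$ is negative on $(0,b)$ and vanishes at $s=b$. Taking the one-sided derivative from the left, its limit is nonnegative:
$$\<D\kappa,\a_s\>\geq0\qfq x\in\Ga_b.$$
Since $D\kappa\not=0$ on $\Ga_b$ by (\ref{kappa0}) and $\<D\kappa,\a_t\>=0$ by (\ref{Pii}), while $\{\a_t,\a_s\}$ is a basis of $T_xS$ by (\ref{as}), the inequality must actually be strict. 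Substituting $D\kappa=\eta Q\a_t$ and using $Q^T=-Q$ together with the orientation convention (\ref{as}),
$$\<Q\a_t,\a_s\>=-\<\a_t,Q\a_s\>=-\det(\a_t,\a_s,\n)<0,$$
which forces $\eta<0$.

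Plugging $\eta<0$, $\lam_1>0$, and $X_2(\lam_2)>0$ back into the displayed identity from (\ref{X21}) yields $\<X_2,Q\a_t\><0$ on $\Ga_b$, as required. The only genuinely nontrivial point in this chain is establishing that $D\kappa$ is transverse to $\Ga_b$ and correctly tracking in which direction it points; after that, the conclusion is pure $Q$-algebra combined with the orientation convention (\ref{as}).
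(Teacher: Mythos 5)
Your proof is correct and follows essentially the same route as the paper: both reduce the claim to the identity $\<X_2,Q\a_t\>=\frac{\lam_1}{\eta}X_2(\lam_2)$ from (\ref{X21}) together with $\lam_1>0$, the normalization (\ref{Xlam2}), and the sign $\eta<0$ in the decomposition $D\kappa=\eta Q\a_t$. The only difference is that you spell out why $\eta<0$ (one-sided derivative of $\kappa$ transverse to $\Ga_b$ plus the orientation convention (\ref{as})), a step the paper asserts directly from (\ref{as}), (\ref{kappa+}), (\ref{kappa0}); your elaboration is a correct filling-in of that assertion.
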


\begin{proof} Since $Q\a_t/|\a_t|,$ $\a_t/|\a_t|$ forms an orthonormal vector basis long $\Ga_0$ with positive orientation, the assumption (\ref{as}) and the curvature conditions (\ref{kappa+}) and (\ref{kappa0})
imply
\be D\kappa=\eta Q\a_t,\quad \eta\not=0,\quad \<D\kappa,-Q\a_t\>>0\qfq x\in\Ga_b.\label{2.35}\ee Then
$$\eta=\<D\kappa,Q\a_t\>/|\a_t|^2<0\qfq x\in\Ga_b.$$
It follows from (\ref{Xlam2}) that
$$\<X_2,Q\a_t\>=\frac1\eta\<X_2,D\kappa\>=\frac{\lam_1}\eta X_2(\lam_2)<0\qfq x\in\Ga_b.$$
\end{proof}

{\bf Proof of Theorem \ref{tt2.1}.}\,\,\,{\bf Step 1.}\,\,\, Let $V\in TS_{b-\varepsilon}$ satisfy (\ref{V2.4}). Taking $Y=0$ and $W=V$ in (\ref{2.73}),  (\ref{2.08}), and (\ref{Pi26}), respectively, we have, by (\ref{V2.4}),
$$\L_{0}V=e^{-s\lam_2}(f_1X_1+\lam_2f_2X_2),$$
\be\<V,-\L_0V\>=\<-\L_{0}^*V,V\>-\div_g(e^{-s\lam_2}\<V,X_1\>Q\nabla\n V+e^{-s\lam_2}\lam_2\<V,X_2\>V),\label{V2.38}\ee and
\beq&&\<X_2,X\>\<(\<V,X_1\>Q\nabla\n V+\lam_2\<V,X_2\>V),\,\,X\>\nonumber\\
&&=\lam_2\<V,X\>^2-\Pi(QX,QX)\<V,X_1\>^2\qfq X\in TS_{b-\varepsilon},\label{V2.39}\eeq
 respectively. Since $Q\a_t/|\a_t|,$ $\a_t/|\a_t|$ forms an orthonormal vector basis along $\Ga_{b-\varepsilon}$ and $\Ga_b,$ respectively, it follows from (\ref{as}) that
 $$\nu=-Q\a_t/|\a_t|\qfq x\in\Ga_b;\quad \nu=Q\a_t/|\a_t|\qfq x\in \Ga_{b-\varepsilon}.$$ When given $\varepsilon>0$ is small, from Lemma \ref{l3.4}, we have
 \be\<X_2,\nu\>>0\qfq x\in\Ga_b;\quad \<X_2,\nu\><0\qfq x\in\Ga_{b-\varepsilon}.\label{V40}\ee

We integrate (\ref{V2.38}) over $S_{b-\varepsilon}$ and use (\ref{V2.39}), where $X=\nu,$  Proposition \ref{p2.2}, and (\ref{V40}) to obtain
\beq
&&2(V,-\L_0V)_{\LL^2(S_{b-\varepsilon},TS_{b-\varepsilon})}
=(V,-\L_0^*V-\L_0V)_{\LL^2(S_{b-\varepsilon},TS_{b-\varepsilon})}\nonumber\\
&&\quad+\int_{\Ga_{b-\varepsilon}\cup\Ga_{b}}
\frac{e^{-\lam_2s}}{\<X_2,\nu\>}[\Pi(Q\nu,Q\nu)\<V,X_1\>^2-\lam_2\<V,\nu\>^2]d\Ga\nonumber\\
&&\geq\si_s\|V\|^2_{\LL^2(S_{b-\varepsilon},TS_{b-\varepsilon})}+\int_{\Ga_{b}}
\frac{e^{-\lam_2s}}{\<X_2,\nu\>}\Pi(Q\nu,Q\nu)\<V,X_1\>^2d\Ga\nonumber\\
&&\quad+\int_{\Ga_{b-\varepsilon}}
\frac{e^{-\lam_2s}}{\<X_2,\nu\>}[\Pi(Q\nu,Q\nu)\<V,X_1\>^2d\Ga-\lam_2\<V,\nu\>^2]d\Ga.\quad\label{2.42}\eeq
Thus (\ref{t2.71}) follows.

{\bf Step 2.}\,\,\, We prove (\ref{t2.72}).
Let
\be\Phi_1=\a_t,\quad \Phi_2=\Phi_1+\eta Q\Phi_1,\label{e2.37}\ee where $\eta>0$ is given small such that
\be\Pi(\Phi_i,\Phi_i)>0\qfq x\in S_{b-\varepsilon},\quad i=1,\,\,2.\label{e2.38}\ee Clearly, $\Phi_1,$ $\Phi_2$ forms a basis of vector fields. It  follows from (\ref{2.99}) that
\be \L_{Z_i}[\Phi_i,V]=e^{-s\lam_2}[(\Phi_i f_1-h_{i1}f_1-h_{i2}f_2+\<H_i,V\>)X_1
+\lam_2(\Phi_i f_2-\<D\div_g\Phi_i,V\>)X_2],\label{L2.44}\ee  where $Z_i$ is given in $(\ref{2.96})$ with $\Phi=\Phi_i,$ and
$$H_i=\nabla\n QD\div_g\Phi_i+h_{i1}\div_gQ\nabla\n-\div_gR_i-\ii_{Z_i}D\Phi_i.$$

We repeat the produce in Step 1 with $\L_0V$ replaced by $\L_{Z_i}[\Phi_i,V]$ to obtain (\ref{t2.72}).
\hfill$\Box$

\setcounter{equation}{0}
\section{The degenerated hyperbolic regions}
\def\theequation{3.\arabic{equation}}
\hskip\parindent
In the sequel we assume that $\Ga_{0}=\{\,\a(t,0)\,|\,t\in[0,a)\,\}$ is a non-degenerated curve, that does not contain just one point. For otherwise, we may replace $\Ga_{0}$ with any curve
$$\Ga_s=\{\,\a(t,s)\,|\,t\in[0,a)\,\}\qfq s\in(0,b).$$

Let
$$X_0=Q\nabla\n QD\kappa\qfq x\in S.$$
Let $F\in\WW^{1,2}(S,TS)$ and $f\in L^2(S).$ We consider a degenerated hyperbolic problem
\be\left\{\begin{array}{l}\<D^2w,Q^*\Pi\>+\dfrac1\kappa X_0w+\kappa(\tr_g\Pi)w=\kappa f+\dfrac1\kappa\<X_0,F\>+\<DF,Q^*\Pi\>\quad x\in S,\\
 w=q_0,\quad\<Dw,Q\nabla\n\a_t\>=q_1,\quad x\in\Ga_{0}.\end{array}\right.\label{4.1}\ee

\begin{thm}\label{t4.1}Let $m\geq0$ be an integer. Let $S$ be of class $\CC^{m,1}.$  Let $F\in\WW^{m+1,2}(S,TS),$ $f\in \WW^{m,2}(S),$ $q_0\in\WW^{m+1,2}(\Ga_0),$ and $q_1\in\WW^{m,2}(\Ga_0).$ Then problem $(\ref{4.1})$ admits a unique solution $w\in\WW^{m+1,2}(S)$ satisfying
\beq\|w\|^2_{\WW^{m+1,2}(S)}+&&\|w\|^2_{\WW^{m+1,2}(\Ga_b)}\leq C(\|f\|^2_{\WW^{m,2}(S)}+\|F\|^2_{\WW^{m+1,2}(S,TS)}\nonumber\\
&&\quad+\|q_0\|^2_{\WW^{m+1,2}(\Ga_0)}+\|q_1\|^2_{\WW^{m,2}(\Ga_0)}),\label{4.4}\eeq
\be \<Dw,X_0\>=\<F,X_0\>\qfq x\in\Ga_0.\label{X4.3}\ee
\end{thm}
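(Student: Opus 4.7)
The plan is to reduce the degenerate scalar hyperbolic equation (\ref{4.1}) to a first-order system of the type (\ref{V2.4}) so that the a priori estimates of Theorem \ref{tt2.1} apply, and then to combine these with classical strictly hyperbolic theory on the non-degenerate interior portion of $S$. Set $V=Dw-F\in TS$; applying Lemma \ref{div} with $P=Q\nabla\n$ yields
\[\div_g(Q\nabla\n V)=\<D^2w,Q^*\Pi\>-\<DF,Q^*\Pi\>+\<\div_g(Q\nabla\n),V\>.\]
A Codazzi-type identity expresses $\div_g(Q\nabla\n)$ through $D\kappa$, which absorbs the $\frac{1}{\kappa}X_0$ term of (\ref{4.1}). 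Together with $\div_g V=\tr_gDV$ one obtains a system of the form
\[\div_g Q\nabla\n V=f_1,\qquad \div_g V=f_2,\]
where $f_1,f_2$ are affine in $w$ and $V$ and whose $\WW^{m,2}$ norms are controlled by those of $f,F$ plus lower-order norms of $w$.

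\textbf{Construction of $w$.} For small $\varepsilon>0$ the sub-region $\Omega_\varepsilon:=S\setminus\overline{S}_{b-\varepsilon}$ has $\kappa\leq-c_\varepsilon<0$, so (\ref{4.1}) is uniformly strictly hyperbolic there with smooth coefficients; classical Cauchy theory (energy estimates along characteristics) yields a unique $w\in\WW^{m+1,2}(\Omega_\varepsilon)$ matching $q_0,q_1$ on $\Ga_0$, together with $\WW^{m+1,2}$-bounded traces of $w$ and $V=Dw-F$ on $\Ga_{b-\varepsilon}$. On $S_{b-\varepsilon}$ I would then apply Theorem \ref{tt2.1} to $V$, treating the trace on $\Ga_{b-\varepsilon}$ (supplied by the interior step) as boundary data, to control $V$ in $\WW^{1,2}(S_{b-\varepsilon})$. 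Fixing $\varepsilon>0$ small enough so that the constants in (\ref{t2.71})--(\ref{t2.72}) are stabilized and gluing the two solutions along $\Ga_{b-\varepsilon}$ produces $w\in\WW^{m+1,2}(S)$ satisfying (\ref{4.4}) for $m=0$; uniqueness follows by applying the same estimate to the homogeneous problem.

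\textbf{Higher regularity and (\ref{X4.3}).} For $m\geq1$ I would proceed by induction. Since $\a_t$ is tangent to both $\Ga_0$ and $\Ga_b$, differentiating (\ref{4.1}) along $\a_t$ preserves its degenerate-hyperbolic structure, and applying the base argument to $D^k_{\a_t}w$ for $k\leq m$ yields (\ref{4.4}); transversal derivatives are then recovered from the equation modulo tangential ones. The relation (\ref{X4.3}) is a compatibility condition on $\Ga_0$ that emerges from the construction: the data $w=q_0$ and $\<Dw,Q\nabla\n\a_t\>=q_1$ uniquely determine $Dw|_{\Ga_0}$ (because $\a_t$ and $Q\nabla\n\a_t$ are linearly independent wherever $\Pi(\a_t,\a_t)\neq0$), and the equation evaluated on $\Ga_0$ together with the specific form of $X_0$ forces this trace to satisfy $\<Dw,X_0\>=\<F,X_0\>$.

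\textbf{Main obstacle.} The principal difficulty is obtaining the non-standard trace control $\|w\|_{\WW^{m+1,2}(\Ga_b)}$ in (\ref{4.4}), which is strictly stronger than what the ambient $\WW^{m+1,2}(S)$-regularity gives through the usual trace theorem. In (\ref{t2.72}) the boundary integrand over $\Ga_b$ degenerates because $\lam_2|_{\Ga_b}=0$, so only the coercive contribution $\Pi(Q\nu,Q\nu)\<V,X_1\>^2$ survives, whose positivity rests on the sign Lemma \ref{l3.4}. Recovering the full $\WW^{m+1,2}$-trace of $w$ on $\Ga_b$ from $\<V,X_1\>$ alone requires combining this coercive term with the degenerated equation (\ref{4.1}) at $\Ga_b$: multiplying by $\kappa$ and evaluating at $\kappa=0$ produces the auxiliary identity $\<Dw-F,X_0\>|_{\Ga_b}=0$, which supplies precisely the $X_2$-derivative component of $w$ that is missing from the surviving boundary term, and closes the estimate.
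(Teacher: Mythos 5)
Your overall architecture coincides with the paper's (split $S$ along $\Ga_{b-\varepsilon_0}$, classical non-degenerate hyperbolic theory on the inner piece, the estimates of Theorem \ref{tt2.1} on the degenerate collar, then glue), but the step on which everything hinges — the reduction of (\ref{4.1}) to a system of type (\ref{V2.4}) — is carried out with the wrong substitution, and the identity you base it on is false. With $V=Dw-F$, computing in a principal frame ($\nabla\n e_i=\lam_ie_i$, $Qe_1=-e_2$, $Qe_2=e_1$) gives $\div_g(Q\nabla\n Dw)=(\lam_2-\lam_1)D^2w(e_1,e_2)+\mbox{lower order}$, which is not the operator $\<D^2w,Q^*\Pi\>=\lam_2D^2w(e_1,e_1)+\lam_1D^2w(e_2,e_2)$ appearing in (\ref{4.1}); moreover the Codazzi equations give $\div_g(Q\nabla\n)=0$, so there is no term available to ``absorb'' $\frac1\kappa\<X_0,\cdot\>$, and your $f_1,f_2$ would retain a coefficient blowing up like $1/\kappa$ at $\Ga_b$ — precisely the degeneration that must be removed before the Section 2 machinery applies. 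The paper's substitution is $W=(\nabla\n)^{-1}(Dw-F)$ on $\Sigma_2$: then $\div_gQ\nabla\n W=-\div_gQF$ because $\div_g QDw=0$ (Lemma \ref{l2.1}), the second-order operator enters through the \emph{other} equation via Lemma \ref{ll2} ($\<D(\nabla\n W),Q^*\Pi\>=\div_g(\kappa W)$), and the singular term disappears because $X_0=-\kappa(\nabla\n)^{-1}D\kappa$ (Lemma \ref{X0}) turns $\frac1\kappa\<X_0,Dw-F\>$ into the bounded term $-\<D\kappa,W\>$. Without this specific choice you never obtain (\ref{4.11}), hence cannot invoke Proposition \ref{p2.2} or the boundary identities, and the estimate (\ref{4.4}) with its $\Ga_b$-trace does not follow. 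A related, smaller point: fixing one $\varepsilon$ and gluing does not produce a solution up to $\Ga_b$; the paper gets existence on $\Sigma_2$ by solving on the non-degenerate exhaustion $\Sigma_{2\varepsilon}$ and using estimates uniform as $\varepsilon\to0$.

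Two further gaps. Your higher-regularity scheme (differentiate along $\a_t$, then ``recover transversal derivatives from the equation'') fails exactly on $\Ga_b$, where the coefficient of the missing second derivative vanishes; the paper instead estimates the commutators $[\Phi_i,W]$ for the two non-characteristic fields $\Phi_1=\a_t$ and $\Phi_2=\a_t+\eta Q\a_t$ via (\ref{2.99}), and only afterwards reconstructs $\tau\tau(w)$ on $\Ga_b$ from $\<D_{\Phi_1}W,X_1\>$ combined with the constraint $\<Dw,X_2\>=\<F,X_2\>$ on $\Ga_b$. Second, your justification of (\ref{X4.3}) cannot work as you state it: on $\Ga_0$ the equation is uniformly hyperbolic and $Dw|_{\Ga_0}$ is freely prescribed by $q_0,q_1$, so no relation with $F$ can be ``forced'' there (if it were, the theorem would be restricting admissible data). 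The relation is forced on the \emph{degenerate} curve: in the proof the singular boundary density $\frac1{|\lam_2|}\<Dw-F,X_2\>^2$ on $\Ga_{b-\varepsilon}$ must stay bounded as $\varepsilon\to0$, which yields (\ref{Ga3.21}) on $\Ga_b$, and $X_0$ is parallel to $X_2$ there by (\ref{X03.21}); the ``$\Ga_0$'' in (\ref{X4.3}) is evidently a slip for $\Ga_b$, which your argument took literally. Your final paragraph's idea for the $\WW^{m+1,2}(\Ga_b)$ trace (coercive $\<V,X_1\>^2$ boundary term plus the constraint at $\kappa=0$) is in the right spirit and is essentially how the paper closes the estimate, but it only becomes available after the correct substitution above.
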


Let
\be\Ga_2(w,S)=\int_0^a(|D^2w|^2+|Dw|^2+|w|^2)\circ\a(t,0)dt.\label{n3.7}\ee

\begin{thm}\label{t4.2}Let $S$ be of class $\CC^{2,1}.$ Let $w$ solve problem $(\ref{4.1}).$ Then there are constants $C>c>0$ such that
\beq c\Ga_2(w,S)&&\leq\|w\|^2_{\WW^{2,2}(S)}+\|f\|^2_{\WW^{1,2}(S)}+\|F\|^2_{\WW^{2,2}(S,TS)}\nonumber\\
&&\leq C(\Ga_2(w,S)+\|f\|^2_{\WW^{1,2}(S)}+\|F\|^2_{\WW^{2,2}(S,TS)}).\label{Ga4.6}\eeq
\end{thm}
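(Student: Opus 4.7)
I decompose (\ref{Ga4.6}) into a \emph{trace bound}
$c\Ga_2(w,S)\le\|w\|^2_{\WW^{2,2}(S)}+\|f\|^2_{\WW^{1,2}(S)}+\|F\|^2_{\WW^{2,2}(S,TS)}$
and an \emph{energy bound}
$\|w\|^2_{\WW^{2,2}(S)}\le C(\Ga_2(w,S)+\|f\|^2_{\WW^{1,2}(S)}+\|F\|^2_{\WW^{2,2}(S,TS)})$.
The trace bound uses (\ref{4.1}) restricted to $\Ga_0$ to extract the normal--normal second derivative from tangential data, whereas the energy bound treats $\Ga_0$ as the non-characteristic initial curve and propagates control into $S$, the degeneracy at $\Ga_b$ being absorbed by the a priori estimates of Section~2.

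\textbf{Trace estimate.} Fix the frame $\tau=\a_t/|\a_t|$ and $\nu=Q\tau$ on $\Ga_0$, and split $D^2w$ on $\Ga_0$ into the scalar components $D^2w(\tau,\tau)$, $D^2w(\tau,\nu)$, $D^2w(\nu,\nu)$. The first two are tangential derivatives along $\Ga_0$ of the boundary traces of $w$ and of $\<Dw,Q\na\n\a_t\>$ (up to Christoffel corrections), hence bounded in $L^2(\Ga_0)$ by $C\|w\|_{\WW^{2,2}(S)}$. The third is recovered from the PDE: since $\Pi(\a_t,\a_t)>0$ on $\Ga_0$ by (\ref{Pi2.801}), the coefficient of $D^2w(\nu,\nu)$ inside $\<D^2w,Q^*\Pi\>$ is non-vanishing, so solving (\ref{4.1}) for this component yields
$$|D^2w(\nu,\nu)|\le C\bigl(|D^2w(\tau,\tau)|+|D^2w(\tau,\nu)|+|Dw|+|w|+|f|+|F|+|DF|\bigr)\qoq\Ga_0.$$
Squaring, integrating over $\Ga_0$, and using $\|f\|_{L^2(\Ga_0)}\le C\|f\|_{\WW^{1,2}(S)}$ together with $\|F\|_{\WW^{1,2}(\Ga_0)}\le C\|F\|_{\WW^{2,2}(S,TS)}$ completes the trace bound.

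\textbf{Energy estimate.} Set $V=Dw-F\in TS$. Using the Gauss--Codazzi identities to rewrite $\div_g Q\na\n$, one checks that $V$ solves a first-order system of the form (\ref{V2.4}), with $f_1,f_2$ expressible in terms of $f$, $F$ and their first derivatives. Apply Theorem~\ref{tt2.1} on $S_{b-\ve}$: the right-hand side of (\ref{t2.72}) is controlled by $\|f\|^2_{\WW^{1,2}(S)}+\|F\|^2_{\WW^{2,2}(S,TS)}$ plus boundary contributions on $\Ga_{b-\ve}$ and $\Ga_b$. On the non-degenerate strip $S\setminus S_{b-\ve}$, where $\kappa$ is bounded away from zero and (\ref{4.1}) is uniformly hyperbolic, classical characteristic/energy methods propagate data from $\Ga_0$ and bound the $\Ga_{b-\ve}$-trace of $V$ and $DV$ by $\Ga_2(w,S)$ plus source norms; the remaining $\Ga_b$-contribution $\|\ii_\nu DV\|^2_{L^2(\Ga_b)}$ is absorbed through a separate trace estimate on $\Ga_b$ obtained by applying (\ref{t2.72}) to a shrinking family of subdomains. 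Sending $\ve\to 0$ and adding the zeroth-order bound on $\|w\|_{\WW^{1,2}(S)}$ yields the required $\WW^{2,2}(S)$ estimate.

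\textbf{Main obstacle.} The principal difficulty is uniformity of the positivity constant $\si_s$ of Proposition~\ref{p2.2} as $\ve\to 0$. This constant is built from the multiplier $\L_Z$ of (\ref{2.73}) with $Z$ given by (\ref{2.96}) and from the pair $\Phi_1,\Phi_2$ of (\ref{e2.37}), and its non-degeneracy near $\Ga_b$ rests on the transversality assumption (\ref{kappa0}), encoded via (\ref{Xlam2}). A secondary issue is that the reduction from the scalar equation (\ref{4.1}) to the vector system (\ref{V2.4}) generates commutator terms whose bounds must close uniformly in $\ve$; the $\CC^{2,1}$ regularity of $S$ together with the algebraic identities collected in Lemmas~\ref{l2.1}--\ref{P} suffices to make this step go through.
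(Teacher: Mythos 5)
Your upper (energy) bound is essentially a re-derivation of what the paper already has: the paper simply applies Theorem \ref{t4.1} with $m=1$, taking $q_0=w|_{\Ga_0}$ and $q_1=\<Dw,Q\nabla\n\a_t\>|_{\Ga_0}$, whose $\WW^{2,2}(\Ga_0)$ and $\WW^{1,2}(\Ga_0)$ norms are bounded by $\Ga_2(w,S)$ as defined in (\ref{n3.7}), and concludes by uniqueness; if you insist on redoing this, note that the correct substitution is $W=(\nabla\n)^{-1}(Dw-F)$ as in (\ref{M4.12}), not $V=Dw-F$, since it is $W$ that satisfies a system of the form (\ref{V2.4}) (via Lemmas \ref{X0} and \ref{ll2}), and that the uniformity issue you raise about $\si_s$ as $\ve\to0$ does not actually arise in the paper's scheme: $\ve_0$ is fixed once and for all, and the limiting procedure runs over the interior subdomains $\Sigma_{2\ve}$ with the single constant $\si_{\ve_0}$ of the fixed region $\Sigma_2$.

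The genuine gap is in your trace bound $c\,\Ga_2(w,S)\leq\|w\|^2_{\WW^{2,2}(S)}+\|f\|^2_{\WW^{1,2}(S)}+\|F\|^2_{\WW^{2,2}(S,TS)}$. You claim that $D^2w(\tau,\tau)$ and $D^2w(\tau,\nu)$ on $\Ga_0$ are controlled in $\LL^2(\Ga_0)$ by $C\|w\|_{\WW^{2,2}(S)}$ because they are tangential derivatives of the boundary traces of $w$ and of $\<Dw,Q\nabla\n\a_t\>$. No trace theorem gives this: for $w\in\WW^{2,2}(S)$ one only has $w|_{\Ga_0}\in H^{3/2}(\Ga_0)$ and $Dw|_{\Ga_0}\in H^{1/2}(\Ga_0)$, so the tangential derivatives you need live in $H^{-1/2}(\Ga_0)$, not in $\LL^2(\Ga_0)$, and there is no quantitative bound of the form you assert (indeed, even making pointwise sense of (\ref{4.1}) on $\Ga_0$ presupposes second-derivative traces, which is circular at the $\WW^{2,2}$ level). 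Your only genuine use of the equation at $\Ga_0$ is algebraic, solving for $D^2w(\nu,\nu)$ from the non-vanishing coefficient $Q^*\Pi(\nu,\nu)=\Pi(\tau,\tau)>0$ guaranteed by (\ref{Pi2.801}); that step is fine but cannot repair the missing control of the other two components. Controlling the full second-derivative trace on $\Ga_0$ by interior $\WW^{2,2}$ norms plus source norms is a hidden-regularity statement that must exploit the uniformly hyperbolic structure of (\ref{4.1}) on a region adjoining $\Ga_0$: this is exactly what the paper imports by applying \cite[Theorem 4.3]{Yao2017} on the non-degenerate subregion $\Sigma_1$ (where $\kappa$ is bounded away from zero), obtaining $c\,\Ga_2(w,S)\leq\|w\|^2_{\WW^{2,2}(\Sigma_1)}+\|f\|^2_{\WW^{1,2}(\Sigma_1)}+\|F\|^2_{\WW^{2,2}(\Sigma_1,T\Sigma_1)}$ and then enlarging $\Sigma_1$ to $S$. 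Some argument of this kind (an energy/multiplier estimate propagating interior control to the boundary trace along $\Ga_0$, or a citation of that result) is indispensable; as written, your trace step would fail already for a general $\WW^{2,2}$ function, and you have not used enough of the equation to exclude that situation.
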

The proofs of Theorems \ref{t4.1} and \ref{t4.2} will be given in the end of this section.

Fix $\varepsilon_0>0$ small, such that (\ref{Xlam2}), Proposition \ref{p2.2}, and
\be\<X_2,Q\a_t\><0\qfq x\in\overline{S}_{b-\varepsilon_0}\quad\mbox{(by Lemma \ref{l3.4})},\label{X24.2}\ee hold true.
The curve $\Ga_{b-\varepsilon_0}$ divides $S$ into two regions:
$$S=\Sigma_1\cup\Sigma_2\cup\Ga_{b-\varepsilon_0},$$ where
$$\Sigma_1=\{\,\a(t,s)\,|\,(t,s)\in[0,a)\times(0,b-\varepsilon_0)\,\},\quad\Sigma_2=\{\,\a(t,s)\,|\,(t,s)\in[0,a)\times(b-\varepsilon_0,b)\,\}.$$
We shall obtain solutions to the boundary-value problems on the regions $\Sigma_1$ and $\Sigma_2,$ separately. Then paste them together to  have solutions to  problem (\ref{4.1}) on
the region $S.$

To  apply some existence results in \cite{Yao2017,Yao2018} to the boundary-value problem   on the regions $\Sigma_i,$ we recall some boundary operators in  \cite{Yao2017}. Let $x\in\Ga_{b-\varepsilon_0}$ be given. $\mu\in T_xS$ with $|\mu|=1$ is said to be the {\it noncharacteristic normal} outside $\Sigma_2$ if there is a curve
$\zeta:$ $(0,\varepsilon)\rw \Sigma_2$ such that
$$\zeta(0)=x,\quad \zeta'(0)=-\mu,\quad\Pi(\mu,\b)=0\qfq \b\in T_x\Ga_{b-\varepsilon_0}.$$ Let $\mu$ be the the noncharacteristic normal field along $\Ga_0$ outside $\Sigma_2.$
 We define the boundary operators $\T_i:$ $T_xM\rw T_xM$ by
$$ \T_i\b=\frac{1}{2}\Big[\b+(-1)^i\chi(\mu,\b)\rho(\b)Q\nabla\n \b]\qfq \b\in T_xM,\quad i=1,\,\,2,$$ where
$$\chi(\mu,\b)=\sign\det\Big(\mu,\b,\n\Big),\quad \varrho(\b)=\frac{1}{\sqrt{-\kappa}}\sign\Pi(\b,\b),$$ and  $\sign$ is the sign function. Noting that
$$\mu=\frac{Q\nabla\n\a_t}{|\nabla\n\a_t|}\qfq x\in\Ga_{b-\varepsilon_0},$$ it follows from (\ref{Pi2.801}) and (\ref{as})  that
$$\chi(\mu,\a_t)\circ\a(t,b-\varepsilon_0)=1,\quad \varrho(\a_t)\circ\a(t,b-\varepsilon_0)=\frac{1}{\sqrt{-\kappa}}\qfq t\in[0,a).$$ Thus
$$(\T_2-\T_1)\a_t\circ\a(t,0)=-\frac1{\sqrt{-\kappa}}Q\nabla\n\a_t\qfq t\in[0,a).$$
Similarly, as $\Ga_{0}$ is  a part of the boundary of the region $\Sigma_1,$ then
$$(\T_2-\T_1)\a_t\circ\a(t,0)=\frac1{\sqrt{-\kappa}}Q\nabla\n\a_t\qfq t\in[0,a).$$

Since $\Sigma_1$ is non-degenerated hyperbolic region, by similar arguments as for \cite[Theorems 4.2 and 4.3]{Yao2017} (or \cite{Yao2018}), we have the following. The details are omitted.

\begin{pro}\label{p4.1} Problem
\be\left\{\begin{array}{l}\<D^2w,Q^*\Pi\>+\dfrac1\kappa X_0w+\kappa(\tr_g\Pi)w=\kappa f+\dfrac1\kappa\<X_0,F\>+\<DF,Q^*\Pi\>,\,\,x\in \Sigma_1,\\
w=q_0,\quad\<Dw,Q\nabla\n\a_t\>=q_1\qfq x\in\Ga_0\end{array}\right.\label{4.7}\ee admits a unique solution $w$ satisfying
\beq\|w\|^2_{\WW^{m+1,2}(\Sigma_1)}+&&\sum_{i=0}^{m+1}\|D^iw\|^2_{\LL^2(\Ga_{b-\varepsilon_0})}\leq C(\|f\|^2_{\WW^{m,2}(\Sigma_1)}+\|F\|^2_{\WW^{m+1,2}(\Sigma_1,T\Sigma_1)}\nonumber\\
&&\quad+\|q_0\|^2_{\WW^{m+1,2}(\Ga_0)}+\|q_1\|^2_{\WW^{m,2}(\Ga_0)}).\nonumber\eeq
\end{pro}

Consider a degenerated problem
\be\left\{\begin{array}{l}\<D^2w,Q^*\Pi\>+\dfrac1\kappa X_0w+\kappa(\tr_g\Pi)w=\kappa f+\dfrac1\kappa\<X_0,F\>+\<DF,Q^*\Pi\>,\,\,x\in \Sigma_2,\\
w=q_0,\quad\<Dw,Q\nabla\n\a_t\>=q_1\qfq x\in\Ga_{b-\varepsilon_0}.\end{array}\right.\label{4.8}\ee

Consier the regions
\be\Sigma_{2\varepsilon}=\{\,\a(t,s)\,|\,(t,s)\in[0,a)\times(b-\varepsilon_0,b-\varepsilon)\,\}\qfq 0<\varepsilon<\varepsilon_0.\label{sigma3.9}\ee Since $\Sigma_{2\varepsilon}$ are also non-degenerated hyperbolic regions for all given $0<\varepsilon<\varepsilon_0,$ for the same reasons as for $\Sigma_1,$ problem (\ref{4.8}) has solutions in $\WW^{m+1,2}(\Sigma_{2\varepsilon})$ on the regions
$\Sigma_{2\varepsilon}.$ Because $0<\varepsilon<\varepsilon_0$ can be arbitrarily small, problem (\ref{4.8}) actually admits a unique solution $w$ on the region $\Sigma_2,$ which satisfies
$w\in\WW^{m+1,2}(\Sigma_{2\varepsilon})$ for all $0<\varepsilon<\varepsilon_0.$ Then Theorem \ref{t4.1} follows from Propositions \ref{p4.1} and  \ref{p4.2} later.

\begin{lem} \label{X0} We have
$$-\kappa(\nabla\n)^{-1}\b=Q\nabla\n Q\b\qfq \b\in T_xM,\quad x\in M,\quad\kappa(x)\not=0.$$
\end{lem}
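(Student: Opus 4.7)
\textbf{Proof plan for Lemma \ref{X0}.}
The shape operator $\nabla\n$ is a symmetric linear endomorphism of $T_xM$ whose determinant equals the Gauss curvature $\kappa$, so under the hypothesis $\kappa(x)\neq 0$ it is invertible and the statement makes sense. My plan is to rewrite it in the equivalent form
\[
\nabla\n\,Q\nabla\n\,Q=-\kappa\,\Id\quad\text{on }T_xM,
\]
which is obtained by post-composing the identity with $\nabla\n$ and using $Q^{-1}=-Q$. This form only involves the basic algebraic identity that for any symmetric $2\times 2$ operator $A$ on an oriented Euclidean plane one has $QAQ=-\mathrm{adj}(A)$, and hence $A\cdot QAQ=-\det(A)\,\Id$. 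Applying this with $A=\nabla\n$, whose determinant is $\kappa$, yields the desired identity.

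To verify the algebraic identity cleanly, I would pick an orthonormal principal frame $\{X_1,X_2\}$ of $T_xM$ with positive orientation so that $\nabla\n X_i=\lam_i X_i$ (where $\lam_1\lam_2=\kappa$), and with the $Q$-relations (\ref{2.76}), namely $QX_2=X_1$ and $QX_1=-X_2$. A direct computation then gives
\[
Q\nabla\n Q X_1=Q\nabla\n(-X_2)=-\lam_2 QX_2=-\lam_2 X_1,\qquad
Q\nabla\n Q X_2=-\lam_1 X_2,
\]
while on the other hand $-\kappa(\nabla\n)^{-1}X_i=-\lam_1\lam_2\lam_i^{-1}X_i$, which equals $-\lam_2 X_1$ and $-\lam_1 X_2$ respectively. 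Since both sides of the claimed identity act identically on a basis of $T_xM$, they agree on every $\b\in T_xM$.

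The main (and only) thing to be careful about is the orientation convention on $Q$: the sign in $\<Q\a,\b\>=\det(\a,\b,\n)$ forces $QX_1=-X_2$, $QX_2=X_1$ rather than the opposite, and an incorrect choice would flip the sign of $\kappa$ in the conclusion. Once the orientation is fixed consistently with (\ref{2.1}) and (\ref{2.76}), the calculation is immediate and no further machinery is needed; in particular, no curvature assumptions beyond $\kappa(x)\ne 0$ enter, so the lemma holds pointwise on $M\setminus\{\kappa=0\}$.
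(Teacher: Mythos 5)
Your proof is correct and takes essentially the same route as the paper: the paper simply diagonalizes the shape operator in a positively oriented principal orthonormal frame and checks both sides of the identity on the basis, which is exactly your verification step, the algebraic identity $QAQ=-\adj A$ (hence $A\,QAQ=-\det(A)\Id$) being just a coordinate-free repackaging of that same computation. One minor remark: the paper's convention is $\<\a,Q\b\>=\det(\a,\b,\n)$ rather than $\<Q\a,\b\>=\det(\a,\b,\n)$ as you quote it, but your orientation worry is in fact moot here, since $Q$ enters the claimed identity quadratically and replacing $Q$ by $-Q$ changes neither side.
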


\begin{proof} Let $\kappa(x)\not=0.$ Let $e_1,$ $e_2$ be an orthonormal basis of $T_xS$ with positive orientation such that
$$\nabla\n e_i=\lam_ie_i\qfq x,$$ where $\lam_i$ are the principal curvatures. We have
\beq Q\nabla\n Q\b&&=Q\nabla\n Q(\<\b,e_1\>e_1+\<\b,e_2\>e_2)=Q\nabla\n(-\<\b,e_1\>e_2+\<\b,e_2\>e_1)\nonumber\\
&&=Q(-\<\b,e_1\>\lam_2e_2+\<\b,e_2\>\lam_1e_1)=-\<\b,e_1\>\lam_2e_1-\<\b,e_2\>\lam_1e_2\nonumber\\
&&=-\kappa(\nabla\n)^{-1}\b.\nonumber\eeq
\end{proof}

\begin{lem}\label{ll2}  For $V\in \WW^{1,2}(S,TS),$ we have
$$\<D(\nabla\n V),Q^*\Pi\>=\div_g\kappa V\qfq x\in S.$$
\end{lem}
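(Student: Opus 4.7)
The plan is to apply Lemma~\ref{div} with the substitutions $P=Q^*\Pi$ and $X=\nabla\vec{n}V$, after making two algebraic identifications that reduce everything to divergence-free data. Concretely, I would establish that (i) $Q^*\Pi=\adj(\nabla\vec{n})=(\tr_g\Pi)\id-\nabla\vec{n}$, and (ii) $\div_g(Q^*\Pi)=0$. Given these, Lemma~\ref{div} produces the stated formula in essentially one line.

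First I would identify $Q^*\Pi$ explicitly. By Lemma~\ref{X0}, $Q\nabla\vec{n}Q=-\kappa(\nabla\vec{n})^{-1}$; on the other hand, the two-dimensional Cayley--Hamilton identity $(\nabla\vec{n})^2-H(\nabla\vec{n})+\kappa\id=0$, with $H=\tr_g\Pi$, rearranges to $\kappa(\nabla\vec{n})^{-1}=H\id-\nabla\vec{n}=\adj(\nabla\vec{n})$. Thus $-Q\nabla\vec{n}Q=H\id-\nabla\vec{n}$, which is the symmetric two-tensor denoted $Q^*\Pi$. An immediate corollary, again by Cayley--Hamilton, is that for any $V\in TS$,
$$(Q^*\Pi)(\nabla\vec{n}V)=\adj(\nabla\vec{n})\,\nabla\vec{n}\,V=\kappa V.$$

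Next I would verify $\div_g(Q^*\Pi)=0$. Writing $Q^*\Pi=H\id-\nabla\vec{n}$, the formula $\div_g(fP)=f\div_gP+P^TDf$ from Lemma~\ref{div} applied to $P=\id$ gives $\div_g(H\id)=DH$. For the second piece, the Codazzi equations on the surface $S$ say $\Pi_{ij;k}=\Pi_{ik;j}$ in any orthonormal frame, so $(\div_g\nabla\vec{n})_i=\Pi_{ij;j}=\Pi_{jj;i}=H_{;i}$, i.e.\ $\div_g(\nabla\vec{n})=DH$. The two contributions cancel, giving $\div_g(Q^*\Pi)=0$.

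Finally I would combine these ingredients by invoking Lemma~\ref{div} with $P=Q^*\Pi$ and $X=\nabla\vec{n}V$:
$$\<Q^*\Pi,D(\nabla\vec{n}V)\>=\div_g\bigl((Q^*\Pi)(\nabla\vec{n}V)\bigr)-\<\div_g(Q^*\Pi),\nabla\vec{n}V\>=\div_g(\kappa V),$$
where the last equality uses $(Q^*\Pi)(\nabla\vec{n}V)=\kappa V$ and $\div_g(Q^*\Pi)=0$. The main (and really only nontrivial) step is the identification $\div_g(Q^*\Pi)=0$ via Codazzi; once the two-dimensional Cayley--Hamilton identity pins down $Q^*\Pi$ as $\adj(\nabla\vec{n})$, this is a short calculation, and Lemma~\ref{div} packages the rest. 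The argument is frame-independent and does not require $\kappa\ne 0$, so it applies uniformly on $S$ including the degenerate curve $\Gamma_b$.
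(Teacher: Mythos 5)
Your proof is correct, but it takes a genuinely different route from the paper. The paper proves the identity by a direct pointwise computation: it fixes a principal orthonormal basis at $x$, extends it to a frame normal at $x$, expands $\<D(\nabla\vec{n}V),Q^*\Pi\>=\lam_2E_1\<\nabla\vec{n}V,E_1\>+\lam_1E_2\<\nabla\vec{n}V,E_2\>$, and then regroups the resulting $D\Pi$-terms (using the Codazzi symmetries $\Pi_{221}=\Pi_{122}$, $\Pi_{121}=\Pi_{112}$) until $E_i(\kappa)$ is recognized, giving $\kappa\div_gV+\<V,D\kappa\>$. You instead argue structurally: $Q^*\Pi$ is the cofactor tensor $(\tr_g\Pi)g-\Pi$ of the shape operator, it is divergence free by Codazzi ($\div_g\Pi=D(\tr_g\Pi)$), and Cayley--Hamilton gives $(Q^*\Pi)\nabla\vec{n}V=\kappa V$, so Lemma \ref{div} finishes in one line. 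Both arguments ultimately rest on the same Codazzi input, but yours isolates it cleanly, is frame-independent, and explains \emph{why} the formula holds (divergence-free cofactor plus $\kappa=\det\nabla\vec{n}$), whereas the paper's computation is more elementary but opaque. One small remark: your intermediate citation of Lemma \ref{X0} formally presupposes $\kappa\neq0$ (it involves $(\nabla\vec{n})^{-1}$); this is harmless both because the lemma is stated for $x\in S$ where $\kappa<0$, and because the identity $-Q\nabla\vec{n}Q=(\tr_g\Pi)\id-\nabla\vec{n}$ you actually use is a pointwise algebraic (adjugate) identity valid for any symmetric operator, as you note, so your closing claim that the argument extends up to $\Gamma_b$ is justified --- it would just be cleaner to state that identity directly rather than route it through the inverse.
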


\begin{proof} Let $x\in S$ be fixed. Let $e_1,$ $e_2$ be an orthonormal basis of $T_xS$ with positive orientation such that
$$\nabla\n e_1=\lam_ie_i.$$ Suppose that $E_1,$ $E_2$ be a frame field normal at $x$ such that
$$E_i=e_i\qaq x.$$ We have
\beq\<D(\nabla\n V),Q^*\Pi\>&&=\lam_2E_1\<\nabla\n V,E_1\>+\lam_1E_2\<\nabla\n V,E_2\>\nonumber\\
&&=\lam_2E_1\< V,\nabla\n E_1\>+\lam_1E_2\<V,\nabla\n E_2\>\nonumber\\
&&=\lam_2\<D_{E_1} V,\nabla\n E_1\>+\lam_1\<D_{E_2}V,\nabla\n E_2\>+\lam_2\< V,D_{E_1}(\nabla\n E_1)\>+\lam_1\<V,D_{E_2}(\nabla\n E_2)\>\nonumber\\
&&=\kappa\div_gV+(\Pi_{22}\Pi_{111}+\Pi_{11}\Pi_{122})\< V, E_1\>+(\Pi_{22}\Pi_{121}+\Pi_{11}\Pi_{222})\<V,E_2\>\nonumber\\
&&=\kappa\div_gV+(\Pi_{22}\Pi_{111}+\Pi_{11}\Pi_{221}-2\Pi_{12}\Pi_{121})\< V, E_1\>\nonumber\\
&&\quad+(\Pi_{22}\Pi_{112}+\Pi_{11}\Pi_{222}-2\Pi_{12}\Pi_{122})\<V,E_2\>\nonumber\\
&&=\kappa\div_gV+E_1(\kappa)\<V,E_1\>+E_2(\kappa)\<V,E_2\>=\div_g\kappa V\qaq x,\nonumber\eeq where $\Pi_{ij}=\Pi(E_i,E_j)$ and $\Pi_{ijk}=D\Pi(E_i,E_j,E_k).$
\end{proof}

\begin{lem} For $v,$ $w\in\WW^{2,2}(S),$ we have
\be\<D^2v,Q^*\Pi\>=\div_g\ii_{Dv}Q^*\Pi,\label{Pi6}\ee
\be \Pi(QDv,QDw)+w\<D^2v,Q^*\Pi\>=\div_g(w\ii_{Dv}Q^*\Pi).\label{Pi7}\ee
\end{lem}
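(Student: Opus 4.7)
The proof rests on recognizing that, on a surface, $Q^*\Pi$ is the cofactor of $\Pi$. Fixing at any point $p\in S$ an oriented orthonormal basis $\{e_1,e_2\}$, a direct check from $Qe_1=-e_2$, $Qe_2=e_1$ yields
$$
(Q^*\Pi)(X,Y)=\Pi(QX,QY)=(\tr_g\Pi)\langle X,Y\rangle-\Pi(X,Y),
$$
so that $Q^*\Pi=(\tr_g\Pi)\,g-\Pi$. In particular, $\ii_{Dv}Q^*\Pi$, viewed as a vector field via the metric, is the push of $Dv$ by the cofactor $-Q\nabla\vec{n} Q$.

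For $(\ref{Pi6})$, the plan is to apply Lemma \ref{div} with $P=Q^*\Pi$ and $X=Dv$:
$$
\div_g\ii_{Dv}Q^*\Pi=\langle Q^*\Pi,D(Dv)\rangle+\langle\div_g Q^*\Pi,Dv\rangle=\langle D^2v,Q^*\Pi\rangle+\langle\div_g Q^*\Pi,Dv\rangle.
$$
This reduces the identity to the key fact $\div_g Q^*\Pi=0$. Using the decomposition above together with the parallelism of $g$, one computes $\div_g((\tr_g\Pi)g)=D(\tr_g\Pi)$, while the Codazzi equation (which asserts that $D\Pi$ is totally symmetric) gives, in a normal frame at $p$, $(D_{e_i}\Pi)(\alpha,e_i)=(D_\alpha\Pi)(e_i,e_i)$ and hence $\div_g\Pi=D(\tr_g\Pi)$. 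The two cancel, giving $(\ref{Pi6})$.

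For $(\ref{Pi7})$, I would use the scalar product rule $\div_g(wY)=w\div_g Y+\langle Dw,Y\rangle$ with the vector field $Y=\ii_{Dv}Q^*\Pi$, and apply $(\ref{Pi6})$ to the first term. The remaining term evaluates directly to
$$
\langle Dw,\ii_{Dv}Q^*\Pi\rangle=Q^*\Pi(Dv,Dw)=\Pi(QDv,QDw),
$$
which completes the identity.

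The only nontrivial input is $\div_g Q^*\Pi=0$, i.e., the classical divergence-free property of the cofactor of the second fundamental form on a surface; this is a direct consequence of Codazzi and is the main place where one must be careful with index and sign conventions. Everything else is routine bookkeeping with Lemma \ref{div}, Lemma \ref{l2.1}, and the metric identifications already in use.
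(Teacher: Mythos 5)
Your proposal is correct, but it reaches (\ref{Pi6}) by a genuinely different route than the paper. The paper fixes a point, chooses a frame field normal at that point which moreover diagonalizes $\Pi$ there (so that $QE_1=-E_2$, $QE_2=E_1$ and $\Pi_{12}=0$ at the point), and expands $\div_g\ii_{Dv}Q^*\Pi=E_1[\Pi(QDv,QE_1)]+E_2[\Pi(QDv,QE_2)]$ by hand, cancelling the first-order terms in $v$ through the Codazzi symmetries $\Pi_{221}=\Pi_{122}$, $\Pi_{121}=\Pi_{112}$; identity (\ref{Pi7}) is then deduced exactly as you do, from $\<\ii_{Dv}Q^*\Pi,Dw\>=\Pi(QDv,QDw)$ and the product rule for $\div_g(wY)$. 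You instead use the invariant decomposition $Q^*\Pi=(\tr_g\Pi)g-\Pi$ and apply Lemma \ref{div} with $P=Q^*\Pi$, $X=Dv$, reducing (\ref{Pi6}) to the single fact $\div_gQ^*\Pi=0$, which you get by playing $\div_g\Pi=D(\tr_g\Pi)$ (total symmetry of $D\Pi$, i.e.\ Codazzi, plus parallelism of $g$) against $\div_g((\tr_g\Pi)g)=D(\tr_g\Pi)$. Both arguments have Codazzi as their only substantive input, so they are close relatives; what yours buys is a cleaner, coordinate-free statement (the cofactor of the second fundamental form is divergence-free) and a reuse of the already-proved Lemma \ref{div}, whereas the paper's bare-hands frame computation stays entirely local and never needs the decomposition of $Q^*\Pi$. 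Your handling of (\ref{Pi7}) coincides with the paper's, and the regularity bookkeeping ($v,w\in\WW^{2,2}$) is at the same level in both.
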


\begin{proof} Let $x\in S$ be fixed. Suppose that $E_1,$ $E_2$ is a frame  field normal at $x$ with positive orientation such that
$$\Pi(E_1,E_2)=0\qaq x.$$ Then
$$QE_1=-E_2,\quad QE_2=E_1\quad\mbox{in a neighborhood of $x.$}$$
Using the above formulas, we compute
\beq\div_g\ii_{Dv}Q^*\Pi&&=E_1\<\ii_{Dv}Q^*\Pi,E_1\>+E_2\<\ii_{Dv}Q^*\Pi,E_2\>=E_1[\Pi(QDv,QE_1)]+E_2[\Pi(QDv,QE_2)]\nonumber\\
&&=E_1(v_1\Pi_{22}-v_2\Pi_{12})+E_2(-v_1\Pi_{12}+v_2\Pi_{11})\nonumber\\
&&=v_{11}\Pi_{22}+v_1\Pi_{221}-v_2\Pi_{121}-v_1\Pi_{122}+v_{22}\Pi_{11}+v_2\Pi_{112}\nonumber\\
&&=\<D^2v,Q^*\Pi\>,\nonumber\eeq where $v_i=E_iv,$ $\Pi_{ijk}=D\Pi(E_i,E_j,E_k),$ and the following formulas have been used
$$\Pi_{221}=\Pi_{122},\quad \Pi_{121}=\Pi_{112}.$$

Since $\<\ii_{Dv}Q^*\Pi,Dw\>=\Pi(QDv,QDw),$ (\ref{Pi7}) follows from (\ref{Pi6}).
\end{proof}

\begin{pro}\label{p4.2} Problem $(\ref{4.8})$ admits a unique solution $w$ satisfying
\beq\|w\|^2_{\WW^{1,2}(\Sigma_2)}+&&\|w\|^2_{\WW^{1,2}(\Ga_{b})}\leq C(\|f\|^2_{\LL^2(\Sigma_2)}+\|F\|^2_{\WW^{1,2}(\Sigma_2,T\Sigma_2)}\nonumber\\
&&\quad+\|q_0\|^2_{\WW^{1,2}(\Ga_{b-\varepsilon_0})}+\|q_1\|^2_{\LL^2(\Ga_{b-\varepsilon_0})}),\label{4.9}\eeq
\be \<Dw,X_0\>=\<F,X_0\>\qfq x\in\Ga_0.\label{4.10}\ee
Furthermore, for $m\geq1,$ the following estimates hold.
\beq\|w\|^2_{\WW^{m+1,2}(\Sigma_2)}+&&\|w\|^2_{\WW^{m+1,2}(\Ga_b)}\leq C(\|f\|^2_{\WW^{m,2}(\Sigma_2)}+\|F\|^2_{\WW^{m+1,2}(\Sigma_2,T\Sigma_2)}\nonumber\\
&&\quad+\|q_0\|^2_{\WW^{m+1,2}(\Ga_{b-\varepsilon_0})}+\|q_2\|^2_{\WW^{m,2}(\Ga_{b-\varepsilon_0})}).\label{w4.9}\eeq
\end{pro}

\begin{proof} Our task is to establish (\ref{4.9})-(\ref{w4.9}).
 Let $w$ solve problem (\ref{4.8}). As before, we define
\be W=(\nabla\n)^{-1}(Dw-F)\qfq x\in\Sigma_2.\label{M4.12}\ee Then
$$Dw=\nabla\n W+F\qfq x\in \Sigma_2.$$

Let $E_1,$ $E_2$ be an orthonormal frame on $S_{b-\varepsilon_0}$ with positive orientation. From Lemma \ref{l2.1}, we have
\beq\div_gQDw&&=\<D_{E_1}(QDw),E_1\>+\<D_{E_2}(QDw),E_2\>=\<QD_{E_1}Dw,E_1\>+\<QD_{E_2}Dw,E_2\>\nonumber\\
&&=-\<D_{E_1}Dw,QE_1\>-\<D_{E_2}Dw,QE_2\>=\<D_{E_1}Dw,E_2\>-\<QD_{E_2}Dw,E_1\>=0.\nonumber\eeq
Thus we obtain
$$\div_gQ\nabla\n W=-\div_gQF\qfq x\in \Sigma_2.$$

On the other hand, from Lemma \ref{X0}, we have
$$X_0=-\kappa(\nabla\n)^{-1}D\kappa\qfq x\in \Sigma_2.$$ Then
$$\<W,D\kappa\>=\<Dw-F,(\nabla\n)^{-1}D\kappa\>=\frac1\kappa\<F-Dw,X_0\>\qfq x\in \Sigma_2.$$
In addition, it follows from Lemma \ref{ll2} and the first equation in (\ref{4.8}) that
\beq\kappa\div_gW+\<W,D\kappa\>&&=\div_g\kappa W=\<D(\nabla\n W),Q^*\Pi\>=\<D^2w-DF,Q^*\Pi\>\nonumber\\
&&=\kappa[f-(\tr_g\Pi)w]+\frac1\kappa\<X_0,F-Dw\>\qfq x\in \Sigma_2.\nonumber\eeq Thus we obtain
$$\div_gW=f-(\tr_g\Pi)w\qfq x\in \Sigma_2.$$
That is, $W\in T\Sigma_2$ solves problem
\be\left\{\begin{array}{l}\div_gQ\nabla\n W=-\div_gQF\qfq x\in \Sigma_2,\\
\div_gW=-(\tr_g\Pi)w+f\qfq x\in \Sigma_2.\end{array}\right.\label{4.11}\ee

{\bf Step 1.}\,\,\, Consider the case of $m=0.$

We first prove (\ref{4.10}). Let $\L_0W$ be given by (\ref{2.73}) where $V=W$ and $Y=0.$ It follows from (\ref{4.11}) that
\be\L_0W=e^{-s\lam_2}\{-(\div_gQF)X_2+\lam_2[-(\tr_g\Pi)w+f]X_2\}.\label{L3.17}\ee From (\ref{2.08}) and (\ref{2.86}), we have
\be2\<W,-\L_0W\>\geq\si_{\varepsilon_0}|W|^2-\div_g(e^{-s\lam_2}\<W,X_2\>Q\nabla\n W+e^{-s\lam_2}\lam_2\<W,X_2\>W),\label{4.12}\ee for $x\in\Sigma_2.$
Let $\Sigma_{2\varepsilon}$ be given in (\ref{sigma3.9}) for $0<\varepsilon<\varepsilon_0.$ We integrate (\ref{4.12}) over $\Sigma_{2\varepsilon}$ and use (\ref{Pi26})
to have
\beq2(W,&&-\L_0W)_{\LL^2(\Sigma_{2\varepsilon},T\Sigma_{2\varepsilon})}\geq\si_{\varepsilon_0}\|W\|^2_{\LL^2(\Sigma_{2\varepsilon},T\Sigma_{2\varepsilon})}\nonumber\\
&&\quad
+\int_{\Ga_{b-\varepsilon}}\frac{e^{-s\lam_2}}{\<X_2,\nu\>}[\frac{\Pi(\a_t,\a_t)}{|\a_t|^2}\<W,X_1\>^2+|\lam_2|\<W,\nu\>^2]d\Ga\nonumber\\
&&\quad
+\int_{\Ga_{b-\varepsilon_0}}\frac{e^{-s\lam_2}}{\<X_2,\nu\>}[\frac{\Pi(\a_t,\a_t)}{|\a_t|^2}\<W,X_1\>^2+|\lam_2|\<W,\nu\>^2]d\Ga,\label{X4.13}\eeq
for $0<\varepsilon<\varepsilon_0.$

Next, we deal with the first boundary integration in the right hand side of (\ref{X4.13}). Since
$$\<W,\nu\>=\<\nu,X_2\>\<W,X_1\>+\frac{\<\nu,X_2\>}{\lam_2}\<Dw-F,X_2\>\qfq x\in\Sigma_{2\varepsilon},$$ it follows that
\beq&&\frac{\Pi(\a_t,\a_t)}{|\a_t|^2}\<W,X_1\>^2+|\lam_2|\<W,\nu\>^2\nonumber\\
&&\geq\frac{\<\nu,X_2\>^2}{2|\lam_2|}\<Dw-F,X_2\>^2+[\frac{\Pi(\a_t,\a_t)}{|\a_t|^2}-|\lam_2|\<\nu,X_1\>^2]\<W,X_1\>^2\qfq x\in\Ga_{b-\varepsilon}.\nonumber\eeq
Noting that
$$|\lam_2|=\O(\varepsilon),\quad\<X_2,\nu\>>0\qfq x\in\Ga_{b-\varepsilon},$$ we have
\beq&&\int_{\Ga_{b-\varepsilon}}\frac{e^{-s\lam_2}}{\<X_2,\nu\>}[\frac{\Pi(\a_t,\a_t)}{|\a_t|^2}\<W,X_1\>^2+|\lam_2|\<W,\nu\>^2]d\Ga\nonumber\\
&&\geq\si\int_{\Ga_{b-\varepsilon}}(\frac1{|\lam_2|}\<Dw-F,X_2\>^2+\<W,X_1\>^2)d\Ga\qfq \varepsilon>0\quad\mbox{small},\label{L3.20}\eeq which imply that
\be\<Dw, X_2\>=\<F,X_2\>\qfq x\in\Ga_b.\label{Ga3.21}\ee On the other hand, from (\ref{2.35}),
\beq X_0&&=Q\nabla\n QD\kappa=-\eta Q\nabla\n\a_t=\eta\<\nabla\n\a_t,QX_1\>X_1+\eta\<\nabla\n\a_t,QX_2\>X_2\nonumber\\
&&=-\eta\<\nabla\n\a_t,X_2\>X_1+\eta\<\nabla\n\a_t,X_1\>X_2=\eta\lam_1\<\a_t,X_1\>X_2\qfq x\in\Ga_b.\label{X03.21}\eeq
Thus (\ref{4.10}) follows.

 We claim that
\be\<X_1,\tau\>\not=0\qfq x\in\Ga_b,\label{tau3.22}\ee where $\tau=\a_t/|\a_t|.$ For otherwise, $\<X_1,\tau\>=0$ implies that $\tau=\pm X_2$ and
$$\Pi(\tau,\tau)=\Pi(X_2,X_2)=0,$$ which contradicts (\ref{Pi2.801}). Thus
\beq\<Dw,\tau\>&&=\<\tau, X_1\>\<Dw,X_1\>+\<\tau,X_2\>\<Dw,X_2\>\nonumber\\
&&=\<\tau, X_1\>\<Dw,X_1\>+\<\tau,X_2\>\<F,X_2\>,\nonumber\eeq that is
$$\<Dw,X_1\>=\frac1{\<X_1,\tau\>}(\<Dw,\tau\>-\<X_2,\tau\>\<F,X_2\>)\qfq x\in\Ga_b.$$ Then
\beq\<W,X_1\>&&=\frac1{\lam_1}\<Dw-F,X_1\>=\frac1{\lam_1\<X_1,\tau\>}\<Dw,\tau\>\nonumber\\
&&\quad-\frac1{\lam_1}\<F,X_1\>-\frac{\<X_2,\tau\>}{\lam_1\<X_1,\tau\>}\<F,X_2\>\qfq x\in\Ga_b.\label{L3.21}\eeq

We now integrate (\ref{4.12}) over $\Sigma_2$ and use (\ref{Pi26})
to obtain
\beq2(W,-\L_0W)_{\LL^2(\Sigma_2,T\Sigma_2)}&&\geq\si_{\varepsilon_0}\|W\|^2_{\LL^2(\Sigma_2,T\Sigma_2)}
+\int_{\Ga_b}\frac{e^{-s\lam_2}\Pi(\a_t,\a_t)}{\<X_2,\nu\>|\a_t|^2}\<W,X_1\>^2d\Ga\nonumber\\
&&\quad+\int_{\Ga_{b-\varepsilon_0}}\frac{e^{-s\lam_2}}{\<X_2,\nu\>}[\frac{\Pi(\a_t,\a_t)}{|\a_t|^2}\<W,X_1\>^2-\lam_2\<W,\nu\>^2]d\Ga.\quad\quad\label{4.13}\eeq
It follows from (\ref{4.13}), (\ref{L3.17}), and (\ref{L3.21}) that
\beq&&\|Dw\|^2_{\LL^2(\Sigma_2,T\Sigma_2)}+\int_{\Ga_b}\<Dw,\tau\>^2d\Ga\leq C(\|w\|^2_{\LL^2(\Sigma_2)}+\|f\|^2_{\LL^2(\Sigma_2)}+\|F\|^2_{\WW^{1,2}(\Sigma_2,T\Sigma_2)}\nonumber\\
&&\quad+\|F\|^2_{\LL^2(\Ga_b,TM)}+\|F\|^2_{\LL^2(\Ga_{b-\varepsilon_0},TM)}+\|Dw\|^2_{\LL^2(\Ga_{b-\varepsilon_0},TM)})\nonumber\\
&&\leq C(\|w\|^2_{\LL^2(\Sigma_2)}+\|f\|^2_{\LL^2(\Sigma_2)}+\|F\|^2_{\WW^{1,2}(\Sigma_2,T\Sigma_2)}+\|Dw\|^2_{\LL^2(\Ga_{b-\varepsilon_0},TM)}).\label{4.18}\eeq
Thus (\ref{4.9}) follows from (\ref{4.18}) and the boundary data in (\ref{4.8}).

{\bf Step 2.}\,\,\,We prove (\ref{w4.9}) in the case of $m=1.$ The case of $m\geq2$ can be treated by the similar arguments.

Let $W$ be given in (\ref{M4.12}). Suppose that $\Phi_2$ and $\Phi_2$ are given in (\ref{e2.37}) such that
$$\Pi(\Phi_i,\Phi_i)>0\qfq x\in\Sigma_2$$ and $\Phi_2,$ $\Phi_2$ forms a basis of vector fields on $\Sigma_2.$ By a similar computation as in (\ref{L2.44}), we have
\beq\L_{Z_i}[\Phi_i,W]&&=e^{-s\lam_2}[(\Phi_i f_2-h_{i1}f_2-h_{i2}f_2+\<H_i,W\>)X_2\nonumber\\
&&\quad+\lam_2(\Phi_i f_2-\<D\div_g\Phi_i,W\>)X_2],\label{L3.24}\eeq where $Z_i,$ $h_{2i},$ and $H_i$ are the same as in (\ref{L2.44}), and
$$f_2=-\div_gQF,\quad f_2=-(\tr_g\Pi)w+f\qfq x\in\Sigma_2.$$ By a similar argument as in Step 1 yields
\beq&&2([\Phi_i,W],-\L_{Z_i}[\Phi_i,W])_{\LL^2(\Sigma_2,T\Sigma_2)}\nonumber\\
&&\geq\si_{\varepsilon_0}\|[\Phi_i,W]\|^2_{\LL^2(\Sigma_2,T\Sigma_2)}
+\int_{\Ga_b\cup\Ga_{b-\varepsilon_0}}e^{-s\lam_2}P_i(W_\varepsilon)d\Ga,\label{L3.25}\eeq where
\be P_i(W)=\frac1{\<X_2,\nu\>}[\Pi(\tau,\tau)\<[\Phi_i,W],X_1\>^2-\lam_2\<[\Phi_i,W],\nu\>^2],\quad i=1,\,\,2.\label{z2.50}\ee

Since
$$\<X_2,\nu\>>0\qfq x\in\Ga_b,\quad\<X_2,\nu\><0\qfq x\in\Ga_{b-\varepsilon_0},$$ from (\ref{L3.24})-(\ref{z2.50}), we obtain
\beq\|[\Phi_i, W]\|^2_{\LL^2(\Sigma_2,T\Sigma_2)}&&+\int_{\Ga_b}\<D_{\Phi_i}W,X_1\>^2d\Ga\leq C(\|f\|^2_{\WW^{1,2}(\Sigma_2)}+\|F\|^2_{\WW^{2,2}(\Sigma_2,T\Sigma_2)}\nonumber\\
&&\quad+\|w\|^2_{\WW^{1,2}(\Sigma_2)}+\|w\|^2_{\LL^2(\Sigma_2)}+\|Dw\|^2_{\LL^2(\Sigma_2,T\Sigma_2)}),\label{phi3.30}\eeq
for $i=1,$ $2,$ which imply that
$$W\in\WW^{1,2}(\Sigma_2,T\Sigma_2).$$ Thus, by the trace theorem,
\be W\in\LL^2(\Ga_b,TM).\label{W3.28}\ee

It is easy to check that  $\tau=\a_t/|\a_t|,$ $X_2$ forms a vector basis along $\Ga_b$ by (\ref{tau3.22}). From (\ref{Ga3.21}), we have
$$Dw=p_1\tau+p_2X_2\qfq x\in\Ga_b,$$ where
$$p_1=\frac1{1-\<\tau,X_2\>^2}[\tau(w)-\<\tau,X_2\>\<F,X_2\>],\quad p_2=\frac1{1-\<\tau,X_2\>^2}[-\<\tau,X_2\>\tau(w)+\<F,X_2\>].$$
Thus
\beq|\<D_\tau Dw,X_1\>|&&\geq|\<\tau,X_1\>||\tau(p_1)|-C(|Dw|+|F|)\nonumber\\
&&\geq |\<\tau,X_1\>||\tau\tau(w)|-C(|\tau(w)|+|F|+|DF|)\qfq x\in\Ga_b.\label{Ga3.28}\eeq

Next, we compute
\beq\<D_{\Phi_1}(\nabla\n W),X_1\>&&=\Phi_1(\lam_1\<W,X_1\>)-\<\nabla\n W,D_{\Phi_1}X_1\>\nonumber\\
&&=\Phi_1(\lam_1)\<W,X_1\>+\<\nabla\n D_{\Phi_1}W,X_1\>+\lam_1\<W,D_{\Phi_1}X_1\>-\<\nabla\n W,D_{\Phi_1}X_1\>.\nonumber\eeq
Thus
\beq\lam_1\<D_{\Phi_1}W,X_1\>&&=\<\nabla\n D_{\Phi_1}W,X_1\>=\<D_{\Phi_1}(Dw-F),X_1\>\nonumber\\
&&\quad-\Phi_1(\lam_1)\<W,X_1\>-\lam_1\<W,D_{\Phi_1}X_1\>+\<\nabla\n W,D_{\Phi_1}X_1\>\nonumber\eeq which yields, by (\ref{W3.28}) and (\ref{Ga3.28}),
\beq|\<D_{\Phi_1}W,X_1\>|&&\geq|\a_t||\<D_\tau Dw,X_1\>|-C(|W|+|DF|)\nonumber\\
&&\geq\si|\tau\tau(w)|-C(|W|+|\tau(w)|+|F|+|DF|\qfq x\in\Ga_b,\nonumber\eeq since $\Phi_1=|\a_t|\tau.$
Thus (\ref{w4.9}) follows from (\ref{phi3.30}) in the case of $m=1.$
\end{proof}

{\bf Proof of Theorem \ref{t4.1}}\,\,\, We solve problem (\ref{4.7}) on the region $\Sigma_1$ to have a solution $w_2$ and then solve problem (\ref{4.8}) on $\Sigma_2$ with the data
 $$ q_0=w_2,\quad q_2=\<Dw_2,Q\nabla\n \a_t\>\qfq x\in\Ga_{b-\varepsilon_0},$$ to obtain the solution $w_2.$
Furthermore, paste two solutions together to
have a solution to problem (\ref{4.1}) on the region $S.$ Clearly, the solution meets our needs.\hfill$\Box$\\

{\bf Proof of Theorem \ref{t4.2}}\,\,\,From Theorem \ref{t4.1} and \cite[Theorem 4.3]{Yao2017}, we have
\beq c\Ga_2(w,S)&&\leq\|w\|^2_{\WW^{2,2}(\Sigma_1)}+\|f\|^2_{\WW^{1,2}(\Sigma_1)}+\|F\|^2_{\WW^{2,2}(\Sigma_1,T\Sigma_1)}\nonumber\\
&&\leq\|w\|^2_{\WW^{2,2}(S)}+\|f\|^2_{\WW^{1,2}(S)}+\|F\|^2_{\WW^{2,2}(S,TS)}\nonumber\\
&&\leq C(\Ga_2(w,S)+\|f\|^2_{\WW^{1,2}(S)}+\|F\|^2_{\WW^{2,2}(S,TS)}).\nonumber\eeq
\hfill$\Box$

\setcounter{equation}{0}
\section{Proofs of Theorems 1.1-1.3 in Section 1}
\def\theequation{6.\arabic{equation}}
\hskip\parindent
Let
$$U\in T^2_{\sym}S.$$ Consider problem
\be\left\{\begin{array}{l}Dv=\nabla\n V+F\qfq x\in S,\\
\div_gV+v\tr_g\Pi=f\qfq x\in S,\end{array}\right.\label{6.1}\ee
where $(V,v)\in\WW^{1,2}(S,T)\times\WW^{1,2}(S)$ is the unknown and
\be F=Q[D(\tr_g U)-\div_gU],\quad f=-\tr_gU(Q\nabla\n\cdot,\cdot)\qfq x\in S.\label{6.7}\ee

For $y\in\WW^{1,2}(S,\R^3),$ let
\be 2v=\nabla y(e_2,e_2)-\nabla y(e_2,e_2)\qfq x\in S,\label{6.2}\ee
\be V=(\nabla\n)^{-1}(Dv-F)\qfq x\in S,\label{6.3}\ee where $e_2,$ $e_2$ is an orthonormal basis of $T_xS$ with positive orientation.

 By\cite[Section 2]{Yao2017}, there is a $y\in\WW^{1,2}(S,\R^3)$ to solve problem (\ref{01}) if and only if $(v,V),$ being given in (\ref{6.2}) and (\ref{6.3}), solves problem (\ref{6.1}). In that case, we have
\be\left\{\begin{array}{l}\nabla_{e_1}y=U(e_1,e_1)e_1+[v+U(e_1,e_2)]e_2-\<QV,e_1\>\n,\\
\nabla_{e_2}y=[-v+U(e_1,e_2)]e_1+U(e_2,e_2)e_2-\<QV,e_2\>\n,\end{array}\right.\qfq x\in S.\label{y6.6}\ee
 Moreover, from \cite[Theorem 2.1]{Yao2017},   $(V,v)$ is a solution to problem (\ref{6.1}) if and only if $v$ solves problem
 \be\<D^2v,Q^*\Pi\>+\frac1\kappa X_0v+v\kappa\tr_g\Pi=\kappa f+\frac1\kappa\<X_0,F\>+\<DF,Q^*\Pi\>\qfq x\in S,\label{6.6}\ee where $f$ and $F$ are given in (\ref{6.7}).\\

{\bf Proof of Theorem \ref{t1.1}}\,\,\, We solve the degenerated hyperbolic problem
\be\left\{\begin{array}{l}\<D^2v,Q^*\Pi\>+\dfrac1\kappa X_0v+\kappa(\tr_g\Pi)v=\kappa f+\dfrac1\kappa\<X_0,F\>+\<DF,Q^*\Pi\>\,\,x\in S,\\
v=\<Dv,Q\nabla\n\a_t\>=0\qfq x\in\Ga_0,\end{array}\right.\nonumber\ee by Theorem \ref{t4.1}, to have the solution $v\in\WW^{m+1,2}(S).$ Then, set
$$V=(\nabla\n)^{-1}(Dw-F)\qfq x\in S.$$
It follows from  \cite[Theorem 2.1]{Yao2017} that problem (\ref{01}) admits a solution $y\in\WW^{m,2}(S,\R^3)$ that satisfies (\ref{y6.6}) on the region $S.$

Let $y=W+w\n,$ where $w=\<y,\n\>\in\WW^{m,2}(S).$ Since
$$\sym DW=U-w\Pi\qfq x\in S,$$ it follows from \cite[Lemma 4.3]{Yaobook} that
$$W\in\WW^{m+1,2}(S,TS).$$
\hfill$\Box$

{\bf Proof of Theorem 1.2}\,\,\,Let $y\in\WW^{2,2}(S,\R^3)$ be an  infinitesimal isometry. Let
$$ 2v=\nabla y(e_2,e_2)-\nabla y(e_2,e_2),\quad V=(\nabla\n)^{-1}Dv\qfq x\in S,$$ where $e_2,$ $e_2$ is an orthonormal basis of $T_xS$ with positive orientation. It follows from (\ref{y6.6}) and (\ref{6.6}) that
\be\left\{\begin{array}{l}\nabla_{e_2}y=ve_2-\<QV,e_2\>\n,\\
\nabla_{e_2}y=-ve_2-\<QV,e_2\>\n\end{array}\right.\qfq x\in S.\label{6.12}\ee and
 \be\<D^2v,Q^*\Pi\>+\frac1\kappa X_0v+v\kappa\tr_g\Pi=0\qfq x\in S.\label{6.13}\ee
From (\ref{6.12}), (\ref{6.13}),  Theorems \ref{t4.1}, and \ref{t4.2}, we have
\be \|y\|^2_{\WW^{2,2}(S)}\leq C(\|v\|^2_{\WW^{1,2}(S)}+\|V\|^2_{\WW^{1,2}(S,TS)})\leq C\Ga_2(v,S)\qfq y\in\V(S,\R^3),\label{6.14}\ee where $\Ga_2(v,S)$ is given in (\ref{n3.7}) with $w$ replaced by $v.$

For given $\varepsilon>0,$ we take $q_{0\varepsilon}\in\WW^{m+2,2}(\Ga_0)$ and $q_{1\varepsilon}\in\WW^{m+1,2}(\Ga_0)$ such that
\be\|v-q_{0\varepsilon}\|^2_{\WW^{2,2}(\Ga_0)}+\|\<Dv,Q\nabla\n\a_t\>-q_{1\varepsilon}\|^2_{\WW^{1,2}(\Ga_0)}\leq\varepsilon.\label{6.15}\ee
Then we solve problem
\be\left\{\begin{array}{l}\<D^2v,Q^*\Pi\>+\dfrac1\kappa X_0v+\kappa(\tr_g\Pi)v=0\qfq x\in S,\\
v=q_{0\varepsilon},\quad\<Dw,Q\nabla\n\a_t\>=q_{1\varepsilon}\quad x\in\Ga_0\end{array}\right.\nonumber\ee to obtain the solution $v_\varepsilon.$
Thus there is  $y_\varepsilon\in\V(S,\R^3)$ with $2v_\varepsilon=\nabla y_\varepsilon(e_2,e_2)-\nabla y_\varepsilon(e_2,e_2).$ By Theorem \ref{t4.1}, $y_\varepsilon\in\WW^{m+2,2}(S,\R^3),$ and, thus, by the imbedding theorem (see \cite[P. 158]{GNT})
$$y_\varepsilon\in\V(S,\R^3)\cup\CC^m_B(S,\R^3).$$ Finally, it follows from Theorem \ref{t4.2}, (\ref{6.14}), and (\ref{6.15}) that
$$ \|y-y_\varepsilon\|^2_{\WW^{2,2}(S)}\leq C\varepsilon.$$ The proof is complete.\hfill$\Box$\\

{\bf Proof of Theorem \ref{t1.3}}\,\,\,As in $\cite{HoLePa}$ we conduct in $2\leq i\leq m.$  Let
$$y_\varepsilon=\sum_{j=0}^{i-1}\varepsilon^jz_j$$ be an $(i-1)$th order isometry of class $\CC^{2+4(m-i+1)}_B(S,\R^3),$ where $z_0=\id$ and $z_2=y$ for some $i\geq2.$ Then
$$\sum_{j=0}^k\na^Tz_j\na z_{k-j}=0\qfq 1\leq k\leq i-1.$$

Next, we shall find out $z_i\in\CC^{2+4(m-i)}_B(S,\R^3)$ such that
$$\phi_\varepsilon=y_\varepsilon+\varepsilon^iz_i$$ is an $i$th order isometry.
By Corollary  $\ref{c1.1}$ there exists  a solution $z_i\in\CC^{2+4(m-i)}_B(S,\R^3)$ to problem
$$\sym \nabla z_i=-\frac{1}{2}\sym\sum_{j=1}^{i-1}\na^Tz_j\na z_{i-j}$$ which satisfies
\beq\|z_i\|_{\CC^{2+4(m-i)}_B(S,\R^3)}&&\leq C\|\sum_{j=1}^{i-1}\sym\na^Tz_j\na z_{i-j}\|_{\CC^{2+4(m-i)+3}_B(S,\R^3)}\nonumber\\
&&\leq C\sum_{j=1}^{i-1}\|z_j\|_{\CC^{2+4(m-i+1)}_B(S,\R^3)}\|z_{i-j}\|_{\CC^{2+4(m-i+1)}_B(S,\R^3)}.\nonumber\eeq
The conduction completes. \hfill$\Box$\\

{\bf Compliance with Ethical Standards}

Conflict of Interest: The author declares that there is no conflict of interest.

Ethical approval: This article does not contain any studies with human participants or animals performed by the authors.

 \end{document}